\documentclass{article}
\usepackage[margin=1.3in]{geometry}

\usepackage{amsmath,amssymb,amsthm,mathrsfs,esint,color,times,textcomp,sectsty,verbatim,graphicx,enumerate,authblk,amsxtra}
\usepackage{xcolor}
\usepackage[colorlinks=true]{hyperref}
\hypersetup{urlcolor=blue, citecolor=red, linkcolor=blue}
\usepackage[toc,page]{appendix}
\numberwithin{equation}{section}
\theoremstyle{plain}
\newtheorem{theorem}{Theorem}[section]

\newtheorem{lemma}[theorem]{Lemma}
\newtheorem{conjecture}{Conjecture}
\newtheorem{corollary}[theorem]{Corollary}

\theoremstyle{definition}
\newtheorem{definition}[theorem]{Definition}

\newtheorem{remark}[theorem]{Remark}

\newcommand{\Rmnum}[1]{\expandafter\@slowromancap\romannumeral #1@}

\newcommand{\mr}{\mathbb{R}}
\newcommand{\ms}{\mathbb{S}}
\newcommand{\ud}{\mathrm{d}}
\allowdisplaybreaks

\begin{document}
	\title{\Large \bf Higher order Bol's inequality  and its  applications}
	\author{Mingxiang Li \thanks{M. Li, Department of Mathematics, Nanjing University, China,  limx@smail.nju.edu.cn}, Juncheng Wei\thanks{J. Wei, Department of Mathematics,  Chinese University of Hong Kong,  Shatin, NT, Hong Kong,  wei@math.cuhk.edu.hk. }}
\date{}
\maketitle
\begin{abstract}
In the  conformal class of  Euclidean space, we give some volume comparison theorems with help of   Q-curvature. Meanwhile, for compact four dimensional manifolds with  non-negative scalar curvature, we give a  volume rigidity theorem with respect to Q-curvature.  Finally, we make use of these results to give some sufficient and  necessary conditions for the existence of solutions to some conformally invariant  equations which answers an open problem raised by Hyder-Martinazzi (2021, JDE).
\end{abstract}

{\bf Keywords: } Q-curvature, Bol's inequality,  Volume comparison.
\medskip

{\bf MSC2020: } 53C18, 35A23
\section{Introduction}

The well-known Bishop-Gromov volume comparison theorem plays an important role in geometry. Specially,  for the complete n-manifold $(M^n,g)$ if  the Ricci curvature satisfies
\begin{equation}\label{Ric geq n-1}
	Ric_g\geq (n-1)g,
\end{equation} then its  volume is less than or equal to that of standard n-sphere with equality holds if and only if $(M,g)$ is isometric to standard sphere.  However, the condition \eqref{Ric geq n-1} seems  very restrictive. One may ask whether we can obtain the same  volume comparison under some weaker assumptions.  In his Ph.D. thesis \cite{Bray}, Bray  tried to relax the condition \eqref{Ric geq n-1} to
\begin{equation}\label{Ric geq  eplsilon_nn-1}
	Ric_g\geq \epsilon_n(n-1)g,
	\quad R_g\geq n(n-1)
\end{equation}
where the  constant $0<\epsilon_n<1$ and $R_g$ is the scalar curvature. Until now, Bray's conjecture is still unsolved but some progresses are obtained in \cite{Yuan} and \cite{Zhang Y}. Another perspective to consider is utilizing the Q curvature and some results have been established by Lin-Yuan \cite{Lin-Yuan} under some special conditions.   In this paper, we want to establish some volume comparison theorems from this  perspective.

Before stating our results, let us briefly recall the  history of Q-curvature. About forty years ago, Q-curvature is introduced by  Branson \cite{Bra}.  For four dimensional manifold $(M^4,g)$, it is defined by
\begin{equation}\label{Q_g}
	Q_g=\frac{1}{6}(-\Delta_g R_g-3|Ric_g|^2+R_g^2).
\end{equation}
Under the conformal transformation $(M^4, \tilde g)=(M^4,e^{2u}g)$, it satisfies the conformal invariant equations
\begin{equation}\label{P_g}
	P_gu+Q_g=Q_{\tilde g}e^{4u}
\end{equation}
where  $P_g$ is the Paneitz operator defined by
$$P_g=(\Delta_g)^2-div_g\left((\frac{2}{3}R_gg-2Ric_g)d\right).$$ For more information about Q-curvature, interested readers can refer to \cite{CY95},  \cite{DM}, \cite{GJMS},  \cite{Gursky}, \cite{WX JFA}, and the references therein.

In the Euclidean space $\mr^n$ where  $n\geq 2$ is an even integer, the Q-curvature of the conformal metric $g=e^{2u}|dx|^2$ satisfies the following conformally invariant  equation
\begin{equation}\label{equ:-Delta ^n/2u=Ke^nu}
	(-\Delta)^{\frac{n}{2}}u(x)=Qe^{nu(x)}\quad \mathrm{on}\;\;\mr^n.
\end{equation}
  When $n=2$, Q-curvature is  exactly the Gaussian curvature of the conformal metric $e^{2u}|\ud x|^2$ on $\mathbb{R}^2$.   By slightly modifying the Ding's lemma in \cite{CL91}, when $Q\leq 1$, we have the volume of $(\mr^2, e^{2u}|dx|^2)$ is greater than or equal to $4\pi$. In general surfaces, it is known as the  Bol's inequality. More details can be found in \cite{Ba},  \cite{Su}, \cite{GHM}, \cite{GM} and the references therein.   In \cite{LX23}, such  Bol's inequality on $\mr^2$ (See Lemma \ref{lem: modified Ding's lemma}) plays a crucial role  in  ruling out the "slow bubble" on torus.

  For higher order cases, it is natural to ask whether a  higher order Bol's inequality holds?  More precisely,  if we give some barriers on Q-curvature, can we obtain some control of the volume?   This  is very important in studying the asymptotic behavior of conformal metrics with null Q-curvature in \cite{Li One bubble for Q}.   Unfortunately, for $n\geq 4$, Chang and Chen's result \cite{Chang-Chen} tells us that the restriction to just the Q curvature alone is insufficient for volume control. In fact, even for the case $Q=1$, the volume could be arbitrarily small (See Theorem 1 in \cite{Chang-Chen}). More details about the volume for constant Q-curvature  can be found in \cite{WY}, \cite{Mar AIH}, \cite{HY}, \cite{Hyder} and the references therein.
   Hence, we need additional assumptions. Inspired by the work of Chang-Qing-Yang \cite{CQY},  we find that  an additional  restriction on the sign of scalar curvature near infinity will help us to obtain the volume comparison theorem.  Besides, we need assume that $Qe^{nu}\in L^1(\mr^n)$. If so, $(\mr^n, e^{2u}|dx|^2)$  is said to be with finite total Q-curvature. We denote the volume of standard n-sphere by $|\ms^n|$. Now,  our first result can be stated as follows.
  \begin{theorem}\label{thm: Q leq n-1 R geq0 half S^n}
  	Given a conformal metric $g=e^{2u}|dx|^2$ on $\mr^n$ where $n\geq 4$ is an even integer with finite total Q-curvature. Suppose that   the Q-curvature
  	$$Q_g\leq (n-1)!$$
  	as well as scalar curvature $R_g\geq 0$ near infinity. Then there holds
  	\begin{equation}\label{volume S^n/2}
  		\int_{\mr^n}e^{nu}\ud x>\frac{ |\ms^n|}{2}.
  	\end{equation}
  \end{theorem}
 \begin{remark}
 	We believe that such lower bound \eqref{volume S^n/2} is not sharp.  In fact,  under additional radial symmetry of $u(x)$, we give a rigidity volume comparison theorem.
 \end{remark}
  \begin{theorem}\label{thm: Q leq n-1 R geq0}
  	Given a conformal metric $g=e^{2u}|dx|^2$ on $\mr^n$ where $n\geq 4$ is an even integer with finite total Q-curvature. Suppose that $u(x)$ is radially symmetric and  the Q-curvature
  	$$Q_g\leq (n-1)!$$
  	as well as scalar curvature $R_g\geq 0$ near infinity. Then there holds
  	$$\int_{\mr^n}e^{nu}\ud x\geq |\ms^n|$$
  with equality holds if and only if $u(x)=\log\frac{2\lambda}{\lambda^2+|x|^2}$ for some constant $\lambda>0$.
  \end{theorem}

  As a dual result, if we give a lower bound of Q-curvature, we also obtain a volume comparison theorem.
    \begin{theorem}\label{thm: Q geq n-1 R geq0}
  	Given a conformal metric $g=e^{2u}|dx|^2$ on $\mr^n$ where $n\geq 4$ is an even integer with finite total Q-curvature. Suppose that $u(x)$ is radially symmetric and  the Q-curvature
  	$$Q_g\geq (n-1)!$$
  	as well as scalar curvature $R_g\geq 0$ near infinity. Then there holds
  	$$\int_{\mr^n}e^{nu}\ud x\leq |\ms^n|$$
  	with equality holds if and only if $u(x)=\log\frac{2\lambda}{\lambda^2+|x|^2}$ for some constant $\lambda>0$.
  \end{theorem}
  \begin{remark}
  	In fact, for $n=2$, Gui and Li give a proof for this result  in Theorem 1.5 of  \cite{GL}. However, their method doesn't work for higher order cases.
  \end{remark}

  	We tend to believe that the radially symmetric assumption on the metric  can be removed.  In section \ref{section: Bol's inequality for polynomial}, we will give some volume comparison theorems for some special Q-curvature without  radially symmetric assumption which further  corroborates our speculation.
  	
  	 For readers'  convenience, we  leave them as  two conjectures.

\begin{conjecture}\label{conjecture}
	Given a conformal metric $g=e^{2u}|dx|^2$ on $\mr^n$ where $n\geq 4$ is an even integer with finite total Q-curvature. Suppose that  the Q-curvature
	$$Q_g\leq (n-1)!$$
and 	the scalar curvature $R_g\geq 0$ near infinity.  Then there holds
	$$\int_{\mr^n}e^{nu}\ud x\geq |\ms^n|$$
	with equality holds if and only if $u(x)=\log\frac{2\lambda}{\lambda^2+|x-x_0|^2}$ for some constant $\lambda>0$ and $x_0\in\mr^n$.
	
\end{conjecture}
\begin{conjecture}\label{conjecture2}
	Given a conformal metric $g=e^{2u}|dx|^2$ on $\mr^n$ where $n\geq 4$ is an even integer with finite total Q-curvature. Suppose that  the Q-curvature
	$$Q_g\geq (n-1)!$$
	and 	the scalar curvature $R_g\geq 0$ near infinity.  Then there holds
	$$\int_{\mr^n}e^{nu}\ud x\leq |\ms^n|$$
	with equality holds if and only if $u(x)=\log\frac{2\lambda}{\lambda^2+|x-x_0|^2}$ for some constant $\lambda>0$ and $x_0\in\mr^n$.
	
\end{conjecture}
By making use of the Chern-Gauss-Bonnet formula
\begin{equation}\label{CGB formula}
	\int_{\ms^n}Q_g\ud\mu_{g}=(n-1)!|\mathbb{S}^n|,
\end{equation}
 we will find that such volume comparison theorems are trivial in  the conformal class of standard sphere $(\ms^n, g)=(\ms^n, e^{2u}g_0).$  For general compact manifolds, can we obtain some volume comparison theorems with help of Q-curvature? Inspired by the work of Gursky (Theorem B in \cite{Gursky}), for a general compact four dimensional manifolds, we give a  volume rigidity theorem as follows.
\begin{theorem}\label{thm: Q_  geq n-1, R_g geq 0 for compact}
	Given  a compact  four dimensional manifold $(M^4, g)$. If the scalar curvature $R_g\geq 0$ and the Q-curvature
	$Q_g\geq 6,$
	then  there holds
	$$V(M^4,g)\leq |\ms^4|$$
	with equality holds if and only if  $(M^4, g)$ is isometric to  $(\ms^4, \tilde g)$ where the metric $\tilde g$ can be written as  $$\tilde g=\left(\frac{2\lambda}{\lambda^2+|x|^2}\right)^2|dx|^2$$
	for some $\lambda>0$ via a stereographic projection.
\end{theorem}
\begin{remark}
	We hope that similar volume comparison theorem still holds for higher dimensional cases.
\end{remark}

The paper is organized as follows. In Section \ref{section:asyptptic}, we study the asymptotic behavior of the normal solutions to \eqref{normal solution} and  prove Theorem \ref{thm: Q leq n-1 R geq0 half S^n}.
In Section \ref{section: Pohozaev identity}, we give some Pohozaev identities with different restrictions which play an important role in the proofs of our main theorems. In Section \ref{section: PCC condition}, we introduce an {\bf s-cone} condition on Q-curvature and study the range of the integrated Q-curvature under the {\bf s-cone} condition. In Section \ref{section: Bol's inequality for polynomial}, we focus on polynomial Q-curvature to derive Bol's inequality. In Section \ref{section: Bol's inequality for radial solution}, we address radially  symmetric solutions and present the proofs for Theorem \ref{thm: Q leq n-1 R geq0} and  Theorem \ref{thm: Q geq n-1 R geq0}.  In section \ref{section: compact}, we will discuss volume comparison theorem by using Q-curvature on compact manifolds and prove Theorem \ref{thm: Q_  geq n-1, R_g geq 0 for compact}. Finally, in Section \ref{section:application}, we offer an applications of our higher-order Bol's inequality and provide a resolution to an open problem posed by Hyder and Martinazzi in \cite{HM}.

\section{Asymptotic behavior of normal solutions}\label{section:asyptptic}

For the sake of convenience, we use the notation $B_R(p)$ to refer to an Euclidean  ball in $\mr^n$ centered at $p\in\mr^n$ with a radius of $R$.  For a function $\varphi(x)$, the positive part of $\varphi(x)$ is denoted as $\varphi(x)^+$
and the negative part of $\varphi(x)$ is denoted as $\varphi(x)^-$.  Set
$\fint_{E}\varphi(x)\ud x=\frac{1}{|E|}\int_{E}\varphi(x)\ud x$
for any measurable set $E$.  For a constant $C$, $C^+$ denotes $C$ if $C\geq 0$, otherwise, $C^+=0$.
Here and thereafter, we denote by $C$ a constant which may be different from line to line. For $s\in \mr$, $[s]$ denotes the largest integer not greater than $s$.

Supposing that $Qe^{nu}\in L^1(\mr^n)$ for the equation \eqref{equ:-Delta ^n/2u=Ke^nu},
we say that the solution  $u(x)$ is a normal solution to \eqref{equ:-Delta ^n/2u=Ke^nu} if $u(x)$ satisfies the integral equation
\begin{equation}\label{normal solution}
	u(x)=\frac{2}{(n-1)!|\mathbb{S}^n|}\int_{\mr^n}\log\frac{|y|}{|x-y|}Q(y)e^{nu(y)}\ud y+C.
\end{equation}
For brevity,  we denote  the normalized integrated Q-curvature as
$$\alpha:=\frac{2}{(n-1)!|\mathbb{S}^n|}\int_{\mr^n}Qe^{nu}\ud x.$$
More details about normal solutions can be found in Section 2 of \cite{Li 23 Q-curvature}.  Here, we assume that both $u(x)$ and $Q(x)$ are smooth functions on $\mathbb{R}^n$. Although similar results can also be obtained under weaker regularity assumptions,  for the sake of brevity, we will focus on the smooth cases throughout  this paper.

For reader's convenience, we repeat  the following lemmas which have  been established in \cite{Li 23 Q-curvature}.
\begin{lemma} \label{lem: L(f)}
	(Lemma 2.3 in \cite{Li 23 Q-curvature}) Consider the normal solution $u(x)$ to \eqref{normal solution} with even integer $n\geq 2$.  For $|x|\gg1$, there holds
	\begin{equation}\label{Lf=-aplha log x+}
		u(x)=(-\alpha+o(1))\log|x|+\frac{2}{(n-1)!|\mathbb{S}^n|}\int_{B_1(x)}\log\frac{1}{|x-y|}Q(y)e^{nu(y)}\ud y
	\end{equation}
	where  $o(1)\to 0$ as $|x|\to\infty$.
\end{lemma}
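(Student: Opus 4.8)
The plan is to substitute the integral representation \eqref{normal solution} directly and then control the discrepancy between $u(x)$ and the claimed expression by a region-by-region analysis of the logarithmic potential, using only that $Qe^{nu}\in L^1(\mr^n)$. Writing $c_n=\frac{2}{(n-1)!|\mathbb{S}^n|}$ and starting from $u(x)=c_n\int_{\mr^n}\log\frac{|y|}{|x-y|}Q(y)e^{nu(y)}\,\ud y+C$, I would first split the domain as $\mr^n=B_1(x)\cup(\mr^n\setminus B_1(x))$. On $B_1(x)$ one has $|y|=|x|+O(1)$, so $\log|y|=\log|x|+O(1/|x|)$ uniformly, whence
$$c_n\int_{B_1(x)}\log\tfrac{|y|}{|x-y|}Qe^{nu}\,\ud y=\log|x|\,c_n\!\int_{B_1(x)}\!Qe^{nu}\,\ud y+c_n\!\int_{B_1(x)}\!\log\tfrac{1}{|x-y|}Qe^{nu}\,\ud y+o(1).$$
Since $B_1(x)$ escapes to infinity, the vanishing $L^1$ tail gives $c_n\int_{B_1(x)}Qe^{nu}\,\ud y=o(1)$, so the first term is $o(1)\log|x|$ while the second is exactly the local term appearing in the statement.

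The remaining task is to show $c_n\int_{\mr^n\setminus B_1(x)}\log\frac{|y|}{|x-y|}Qe^{nu}\,\ud y=-\alpha\log|x|+o(1)\log|x|$. Adding and subtracting $\log|x|$ and using $c_n\int_{\mr^n\setminus B_1(x)}Qe^{nu}\,\ud y=\alpha+o(1)$ (again by the tail estimate), this reduces to proving
$$c_n\int_{\mr^n\setminus B_1(x)}\log\frac{|x|\,|y|}{|x-y|}\,Qe^{nu}\,\ud y=o(1)\log|x|.$$
Here I would decompose $\mr^n\setminus B_1(x)$ into three pieces according to the relative size of $|y|$ and $|x|$: the inner region $\{|y|\le |x|/2\}$, the intermediate annulus $\{|x|/2\le |y|\le 2|x|\}$, and the outer region $\{|y|\ge 2|x|\}$. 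On the inner region $|x-y|\asymp|x|$, so $\log\frac{|x||y|}{|x-y|}=\log|y|+O(1)$, and a two-scale estimate ($|y|\le R$ contributes $O(1)$ for fixed $R$ by local integrability of $\log|y|$; the shell $R\le|y|\le|x|/2$ contributes at most $\log|x|$ times a vanishing $L^1$ tail) gives $o(1)\log|x|$. On the outer region $|x-y|\asymp|y|$, so $\log\frac{|x||y|}{|x-y|}=\log|x|+O(1)$, and $\int_{|y|\ge 2|x|}|Q|e^{nu}\to0$ yields $o(1)\log|x|$.

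The genuinely delicate piece is the intermediate annulus, which is also where the local kernel $\log\frac{1}{|x-y|}$ could be singular; but since $B_1(x)$ has been removed we have $1\le|x-y|\le 3|x|$ there while $|y|\asymp|x|$, so a direct computation shows $\log|x|-C\le\log\frac{|x||y|}{|x-y|}\le 2\log|x|+C$, i.e. the kernel is uniformly $O(\log|x|)$. Combining this bound with $\int_{|x|/2\le|y|\le2|x|}|Q|e^{nu}\to0$ produces the desired $o(1)\log|x|$. Assembling the three regions with the $B_1(x)$ computation, and absorbing the constant $C$ together with the other lower-order contributions into $o(1)\log|x|$, gives the stated identity. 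I expect the main obstacle to be the bookkeeping in the intermediate annulus: one must verify that the separately large factors $\log|y|$ and $\log|x-y|$ recombine into a clean $O(\log|x|)$ bound rather than being estimated in isolation, and that every $o(1)$ is uniform as $|x|\to\infty$ so that it survives multiplication by $\log|x|$.
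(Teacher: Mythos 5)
Your proof is correct: the splitting into $B_1(x)$, the inner region $\{|y|\le |x|/2\}$, the intermediate annulus, and the outer region $\{|y|\ge 2|x|\}$, with the kernel $\log\frac{|x||y|}{|x-y|}$ bounded by $O(\log|x|)$ against vanishing $L^1$ tails of $Qe^{nu}$ and the two-scale argument near the origin, is exactly the standard argument for this asymptotic, and every estimate you give (including absorbing the constants and the $O(1)$ errors into $o(1)\log|x|$) goes through. The paper does not reproduce a proof here --- it quotes Lemma 2.3 of the reference \cite{Li 23 Q-curvature} --- and your region-by-region decomposition of the logarithmic potential is the same approach used there, so there is nothing further to reconcile.
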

\begin{lemma}\label{lem: B_1 L(f)} (Lemma 2.4 in \cite{Li 23 Q-curvature})
	Consider the normal solution $u(x)$ to \eqref{normal solution} with even integer $n\geq 2$.  For $|x|\gg1$ and any  $r_0>0$ fixed, there holds
	\begin{equation}\label{B_1(x)Lf}
		\fint_{B_{r_0}(x)}u(y)\ud y=(-\alpha+o(1))\log|x|.
	\end{equation}
\end{lemma}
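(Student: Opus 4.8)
The plan is to obtain this averaged asymptotic directly from the pointwise expansion of Lemma \ref{lem: L(f)} by integrating it over $B_{r_0}(x)$ and controlling the three resulting pieces separately. Writing the conclusion of Lemma \ref{lem: L(f)} as
\[
u(y) = -\alpha\log|y| + \varepsilon(y)\log|y| + R(y), \qquad R(y) := \frac{2}{(n-1)!|\mathbb{S}^n|}\int_{B_1(y)}\log\frac{1}{|y-z|}Q(z)e^{nu(z)}\,\ud z,
\]
where $\varepsilon(y)\to 0$ as $|y|\to\infty$, I would take $\fint_{B_{r_0}(x)}$ of each summand and analyze the limit $|x|\to\infty$ with $r_0>0$ fixed.

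For the first term, every $y\in B_{r_0}(x)$ satisfies $\big|\log|y|-\log|x|\big|=\big|\log(|y|/|x|)\big|=O(r_0/|x|)=o(1)$, so $\fint_{B_{r_0}(x)}\log|y|\,\ud y=\log|x|+o(1)$ and this term contributes $-\alpha\log|x|+o(1)$. For the second term, since $|y|\geq |x|-r_0\to\infty$ uniformly on $B_{r_0}(x)$ we have $\sup_{y\in B_{r_0}(x)}|\varepsilon(y)|\to 0$, while $\log|y|\leq \log|x|+o(1)$; hence this piece is bounded by $o(1)\cdot(\log|x|+o(1))=o(\log|x|)$.

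The crux, and the only place I expect real work, is the third term, where the claim is $\fint_{B_{r_0}(x)}R(y)\,\ud y=o(1)$. I would apply Fubini to the double integral (justified by the standing integrability $|Q|e^{nu}\in L^1(\mr^n)$, implicit in the very definition of a normal solution) to exchange the order of integration, obtaining
\[
\fint_{B_{r_0}(x)}R(y)\,\ud y = \frac{2}{(n-1)!|\mathbb{S}^n||B_{r_0}|}\int\Big(\int_{\{y\in B_{r_0}(x):\,|y-z|<1\}}\log\tfrac{1}{|y-z|}\,\ud y\Big)Q(z)e^{nu(z)}\,\ud z.
\]
The inner integral is nonzero only for $z\in B_{1+r_0}(x)$ and, after translating and using $\int_{B_1(0)}\log\frac{1}{|w|}\,\ud w=:A_n<\infty$, is bounded in absolute value by $A_n$ uniformly in $z$. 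Thus the whole expression is dominated by $\frac{2A_n}{(n-1)!|\mathbb{S}^n||B_{r_0}|}\int_{B_{1+r_0}(x)}|Q|e^{nu}\,\ud z$, and since $B_{1+r_0}(x)\subset\mr^n\setminus B_{|x|-1-r_0}(0)$ recedes to infinity, the vanishing tail of the finite measure $|Q|e^{nu}\,\ud z$ forces this to tend to $0$ as $|x|\to\infty$.

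Collecting the three contributions yields $\fint_{B_{r_0}(x)}u(y)\,\ud y=-\alpha\log|x|+o(\log|x|)=(-\alpha+o(1))\log|x|$, as asserted. The main obstacle is entirely concentrated in the third term: the point is that the locally singular kernel $\log\frac{1}{|y-z|}$ integrates to a finite constant over a unit ball, so after Fubini the full local contribution is controlled by the $L^1$-mass of $Qe^{nu}$ on a bounded neighborhood of $x$, which vanishes in the limit. Notably, no cancellation or sign hypothesis on $Q$ is needed beyond integrability.
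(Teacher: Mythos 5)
Your proof is correct: the decomposition of $u(y)$ via Lemma \ref{lem: L(f)}, the uniform estimates $\log|y|=\log|x|+o(1)$ and $\sup_{B_{r_0}(x)}|\varepsilon|=o(1)$ on the fixed ball, and the Tonelli/Fubini step bounding $\fint_{B_{r_0}(x)}R(y)\,\ud y$ by a constant times $\int_{B_{1+r_0}(x)}|Q|e^{nu}\,\ud z$, which vanishes as the tail of a finite measure, are all sound. The paper itself states this lemma without proof, citing Lemma 2.4 of \cite{Li 23 Q-curvature}, and your argument is essentially the same standard route taken there: average the pointwise expansion and kill the locally log-singular term by exchanging the order of integration against the integrable kernel $\log\frac{1}{|y-z|}$ on unit balls.
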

\begin{lemma}\label{lem:Q^+ and Q^-} (Lemma 2.10 in \cite{Li 23 Q-curvature})  Consider the normal solution $u(x)$ to \eqref{normal solution} with even integer $n\geq 2$.
	If  $Q^+$ has compact support, there holds
	$$ u(x)\leq -\alpha \log|x|+C,\quad |x|\gg1.$$
	Conversely, if $Q^-$ has compact support, there holds
	$$u(x)\geq -\alpha\log|x|-C,\quad |x|\gg1.$$
\end{lemma}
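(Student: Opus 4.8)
The final statement to be proved is Theorem \ref{thm: radial solution for Q geq (n-1)!}, the dual volume upper bound under the radial symmetry assumption. I lay out my plan below.

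\medskip

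\textbf{Overall strategy.} The plan is to exploit radial symmetry to reduce the problem to a one-dimensional ODE comparison. Writing $u(x)=u(r)$ with $r=|x|$, I introduce the integrated Q-curvature up to radius $r$, namely
\begin{equation*}
	\alpha(r):=\frac{2}{(n-1)!|\mathbb{S}^n|}\int_{B_r(0)}Q(y)e^{nu(y)}\ud y,
\end{equation*}
so that $\alpha(r)\uparrow \alpha$ as $r\to\infty$. The representation formula \eqref{normal solution}, after differentiating in $r$ and using radial symmetry, should yield a clean first-order relation between $u'(r)$ and $\alpha(r)$; concretely I expect something of the shape $r\,u'(r)=-\alpha(r)+(\text{lower-order / boundary terms})$, because the radial average of $\log\frac{|y|}{|x-y|}$ has an explicit derivative. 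This is the radial analogue of Lemma \ref{lem: L(f)} and is the workhorse of the argument.

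\medskip

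\textbf{Key steps.} First I would record the monotonicity and finiteness facts: since $Q\ge (n-1)!>0$ everywhere and $Qe^{nu}\in L^1$, the function $\alpha(r)$ is strictly increasing with finite limit $\alpha$, and the polynomial-type growth bound $Q\le C(|x|+1)^k$ guarantees the tail integrals converge and the asymptotics of Lemmas \ref{lem: B_1 L(f)} and \ref{lem:Q^+ and Q^-} apply (here $Q^-$ is empty, so $u(x)\ge -\alpha\log|x|-C$). Second, I would derive the exact radial identity for $\frac{\ud}{\ud r}\big(\fint_{\partial B_r}u\big)$ in terms of $\alpha(r)$. Third, I would set up the volume integral in radial coordinates,
\begin{equation*}
	\int_{\mr^n}e^{nu}\ud x=|\mathbb{S}^{n-1}|\int_0^\infty e^{nu(r)}r^{n-1}\ud r,
\end{equation*}
and compare it against the model case $Q\equiv (n-1)!$, whose normal radial solution is the standard bubble $u_0(r)=\log\frac{2}{1+r^2}$ realizing equality $\int e^{nu_0}=|\mathbb{S}^n|$. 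The comparison principle I want is: because $Q\ge (n-1)!$ pushes $\alpha(r)$ up, the solution decays \emph{faster} at infinity than the model (larger $\alpha$ means more negative $u'$), which should force the volume to be \emph{smaller}. Fourth, I would make this rigorous by a monotonicity/rearrangement argument: define a one-parameter family interpolating between $Q$ and the constant $(n-1)!$, or directly estimate $\int_0^\infty e^{nu}r^{n-1}\ud r$ by integrating the differential inequality for $u$ and using $\alpha(r)\ge \alpha_0(r)$ at matched total mass. The rigidity statement follows by tracing the equality cases: equality in the volume bound forces $\alpha(r)$ to coincide with the model profile for all $r$, which forces $Q\equiv (n-1)!$.

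\medskip

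\textbf{Main obstacle.} The delicate point is the final comparison step, turning "$\alpha(r)$ is larger" into "the volume is smaller" with the correct constant $|\mathbb{S}^n|$ and with sharp rigidity. The difficulty is that $\alpha$ (the total mass) is itself determined self-consistently by $u$ through the integral equation, so I cannot simply fix $\alpha$ and compare profiles; a naive pointwise comparison of $u$ against $u_0$ can fail because the two solutions need not have the same total integrated curvature. I expect the cleanest route is to parametrize by the total mass $\alpha$ and prove a monotone dependence of the volume on the \emph{shape} of $\alpha(r)$ for fixed $\alpha$, showing that front-loading the mass (which $Q\ge(n-1)!$ does, since $Q$ is largest where $e^{nu}$ is largest, near the origin) minimizes the volume. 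Establishing this monotonicity rigorously—likely via an integration-by-parts identity or a Pohozaev-type relation from Section \ref{section: Pohozaev identity} combined with the radial ODE—is where the real work lies, and the upper growth bound $Q\le C(|x|+1)^k$ will be needed precisely to justify the boundary terms vanish in that integration by parts.
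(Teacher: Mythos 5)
Your proposal does not address the statement at hand. The statement is Lemma \ref{lem:Q^+ and Q^-}: a pointwise asymptotic estimate, $u(x)\leq -\alpha\log|x|+C$ when $Q^+$ has compact support, and $u(x)\geq -\alpha\log|x|-C$ when $Q^-$ has compact support. Your text instead opens with ``The final statement to be proved is Theorem \ref{thm: radial solution for Q geq (n-1)!}'' and sketches a radial ODE/comparison strategy for the volume upper bound. Nothing in the proposal engages with the compact-support hypotheses on $Q^{\pm}$ or produces the two one-sided logarithmic bounds; worse, you invoke ``the asymptotics of Lemmas \ref{lem: B_1 L(f)} and \ref{lem:Q^+ and Q^-}'' as an ingredient, which would be circular if this were meant as a proof of the lemma itself. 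So as an attempt at the stated lemma there is not merely a gap --- the entire argument is absent.

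For the record, the lemma (which the paper imports from Lemma 2.10 of \cite{Li 23 Q-curvature} rather than reproving) follows from a direct splitting of the kernel in the representation formula \eqref{normal solution}, with no radial symmetry and no Pohozaev identity. Decompose $Q=Q^+-Q^-$. If $Q^+$ is supported in $B_{R_0}(0)$, then for $|x|\gg1$ and $y\in B_{R_0}(0)$ one has $\log\frac{|y|}{|x-y|}\leq-\log|x|+C$, so the $Q^+$ contribution is at most $-\beta^+\log|x|+C$ with $\beta^+:=\frac{2}{(n-1)!|\mathbb{S}^n|}\int_{\mr^n}Q^+e^{nu}\ud y$; for the $Q^-$ contribution, the elementary inequality $|x-y|\leq|x|(|y|+1)$ for $|x|\geq1$ gives
\begin{equation*}
\int_{\mr^n}\log\frac{|x-y|}{|y|}\,Q^-e^{nu}\ud y\leq \beta^-\log|x|+\int_{\mr^n}\log\frac{|y|+1}{|y|}\,Q^-e^{nu}\ud y\leq \beta^-\log|x|+C,
\end{equation*}
the last integral being finite since $Q^-e^{nu}\in L^1(\mr^n)$ is locally bounded near the (integrable) logarithmic singularity at $y=0$. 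Adding the two pieces yields $u(x)\leq(-\beta^++\beta^-)\log|x|+C=-\alpha\log|x|+C$, and the case where $Q^-$ has compact support is symmetric with all inequalities reversed. You should supply an argument of this elementary kernel-estimate type; the global comparison machinery you outline is aimed at a different theorem and, even there, diverges from the paper's actual route (which introduces the auxiliary normal solution $v$ with constant curvature and applies the Pohozaev identity to $(n-1)!e^{nh}$).
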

\begin{lemma}\label{lem:B_r_0|x|}
	(Lemma 2.5 in \cite{Li 23 Q-curvature}) Consider the normal solution $u(x)$ to \eqref{normal solution} with even integer $n\geq 2$.  For $|x|\gg1$ and any  $0<r_1<1$ fixed, there holds
	\begin{equation}\label{B_|x|/2v}
		\fint_{B_{r_1|x|}(x)}u(y)\ud y=(-\alpha +o(1))\log|x|.
	\end{equation}
\end{lemma}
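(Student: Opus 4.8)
\textbf{Proof strategy for Theorem \ref{thm: radial solution for Q geq (n-1)!}.}

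The plan is to reduce the $n$-dimensional radial problem to a one-dimensional analysis by changing to the logarithmic variable $t=\log|x|$ and exploiting the radial structure of the normal solution. First I would set $v(t)=u(e^t)$ (more precisely, work with the radial profile of $u$) and recast the volume integral as
\begin{equation}\label{plan:volume-integral}
	\int_{\mr^n}e^{nu}\ud x=|\mathbb{S}^{n-1}|\int_{-\infty}^{\infty}e^{nv(t)}e^{nt}\ud t,
\end{equation}
so that the target inequality $\int_{\mr^n}e^{nu}\ud x\leq|\mathbb{S}^n|$ becomes a statement about the one-variable function $w(t):=v(t)+t=\log(|x|\,e^{u})$. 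The growth hypothesis $(n-1)!\leq Q(x)\leq C(|x|+1)^k$ serves two purposes: the lower bound $Q\geq (n-1)!$ forces $\alpha\geq 1$ via the normalization in \eqref{normal solution}, while the polynomial upper bound guarantees that the integral defining the normal solution converges and that the asymptotics of Lemmas \ref{lem: L(f)}--\ref{lem:B_r_0|x|} apply, giving the sharp leading behavior $v(t)=(-\alpha+o(1))t$ as $t\to+\infty$.

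The key step is a monotonicity/concavity argument for $w(t)$. Differentiating the radial form of \eqref{normal solution}, one obtains an explicit ODE expression for $w'(t)$ in terms of the cumulative mass
\begin{equation}\label{plan:mass}
	m(t):=\frac{2}{(n-1)!|\mathbb{S}^n|}\int_{B_{e^t}}Q e^{nu}\ud x,
\end{equation}
so that $w'(t)=1-m(t)$ in the appropriate normalization. Since $Q\geq (n-1)!$, the integrand controlling $m(t)$ dominates the model integrand obtained from $Q\equiv(n-1)!$, and a comparison with the standard-sphere solution $u_0$ (for which $m_0(t)$ increases from $0$ to exactly $1$ and the volume equals $|\mathbb{S}^n|$) shows that $m(t)$ grows at least as fast as $m_0(t)$. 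This makes $w(t)$ increase no faster than the corresponding profile for the sphere, and after integrating \eqref{plan:volume-integral} against this comparison, the volume inequality follows. The rigidity statement is then read off from the comparison: equality in the volume bound forces $m(t)\equiv m_0(t)$ for all $t$, hence $Q\equiv(n-1)!$.

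The main obstacle I anticipate is handling the interplay between the lower growth control and the upper polynomial growth at the two ends $t\to\pm\infty$. At $t\to-\infty$ (the origin) one must verify that $w(t)\to-\infty$ with the right rate so that no mass escapes and the integral \eqref{plan:volume-integral} is genuinely controlled; at $t\to+\infty$ the polynomial bound $Q\leq C(|x|+1)^k$ is exactly what prevents the solution from behaving pathologically, and one needs the precise averaged asymptotics of Lemma \ref{lem:B_r_0|x|} rather than a crude estimate to close the comparison. A secondary technical point is that $\alpha$ could a priori be large, so the comparison inequality $m(t)\geq m_0(t)$ must be established carefully using the pointwise bound $Q\geq(n-1)!$ together with the positivity of $e^{nu}$, rather than by assuming any normalization of $\alpha$ in advance; tracking the equality case through this comparison is where the rigidity is won.
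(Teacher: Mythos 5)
There is a fundamental mismatch here: the statement you were asked to prove is Lemma \ref{lem:B_r_0|x|}, the averaged asymptotics $\fint_{B_{r_1|x|}(x)}u(y)\,\ud y=(-\alpha+o(1))\log|x|$ for a normal solution, but what you have written is a proof strategy for Theorem \ref{thm: radial solution for Q geq (n-1)!}, the volume upper bound for radial solutions with $Q\geq(n-1)!$. Nothing in your proposal establishes \eqref{B_|x|/2v}; on the contrary, you explicitly invoke Lemma \ref{lem:B_r_0|x|} (``one needs the precise averaged asymptotics of Lemma \ref{lem:B_r_0|x|} rather than a crude estimate'') as an input to your comparison argument, so relative to the assigned task the proposal is circular: it assumes the very statement it was supposed to prove. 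Nor do the hypotheses match: the lemma holds for an arbitrary normal solution with $Qe^{nu}\in L^1(\mr^n)$, with no radial symmetry, no sign condition on $Q-(n-1)!$, and no polynomial growth bound, whereas your entire plan is built on those extra assumptions.

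For the record, the actual proof (Lemma 2.5 in \cite{Li 23 Q-curvature}, which this paper cites rather than reproves) works directly from the representation \eqref{normal solution}: exchange the average over $B_{r_1|x|}(x)$ with the integral in $z$, so that one must estimate $\fint_{B_{r_1|x|}(x)}\log\frac{|z|}{|y-z|}\,\ud y$ against the finite measure $Qe^{nu}\,\ud z$. One then splits the $z$-integral into a far region, where the averaged kernel behaves like $\log\frac{|z|}{|x|}+O(1)$ or is uniformly controlled, and a near region $|z|\lesssim|x|$, where the averaged kernel is $-\log|x|+O(1)$ (the average of $\log\frac{1}{|y-z|}$ over a ball of radius comparable to $|x|$ is $-\log|x|+O(1)$, with no local singularity issue precisely because one averages in $y$ first); since the mass outside any large ball is $o(1)$ by $Qe^{nu}\in L^1$, the total contribution is $(-\alpha+o(1))\log|x|$. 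If you want to salvage your submission, that kernel-averaging argument is what needs to be written out; the radial comparison machinery you sketched belongs to a different theorem and, as the paper's own proof of that theorem shows (via the Pohozaev inequality of Lemma \ref{lem: Pohozaev identity} applied to the auxiliary solution $v$ with $h=u-v$ superharmonic), even there the paper takes a different route than your mass-comparison ODE plan.
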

\begin{lemma}\label{lem:B_x^-r_1}
(Lemma 2.6 in \cite{Li 23 Q-curvature}) Consider the normal solution $u(x)$ to \eqref{normal solution} with even integer $n\geq 2$.	For $|x|\gg1$ and  any   $r_2>0$ fixed, there holds
	\begin{equation}\label{B_|x|/2v}
		\fint_{B_{|x|^{-r_2}}(x)}u(y)\ud y=(-\alpha +o(1))\log|x|.
	\end{equation}
\end{lemma}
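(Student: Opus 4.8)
We prove the estimate by building on the local representation from Lemma \ref{lem: L(f)}. Writing $\rho=|x|^{-r_2}$, the plan is first to note that every point $z\in B_{\rho}(x)$ satisfies $|z|\geq |x|-\rho\to\infty$ and $\log|z|=\log|x|+o(1)$ uniformly as $|x|\to\infty$, since $\rho\to 0$. Applying \eqref{Lf=-aplha log x+} at each such $z$ gives
\[
u(z)=(-\alpha+o(1))\log|z|+I(z),\qquad I(z):=\frac{2}{(n-1)!|\mathbb{S}^n|}\int_{B_1(z)}\log\frac{1}{|z-y|}Q(y)e^{nu(y)}\,\ud y,
\]
with the $o(1)$ uniform over $B_\rho(x)$. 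Averaging and using $\log|z|=\log|x|+o(1)$, the first term integrates to $(-\alpha+o(1))\log|x|$. Hence the whole statement reduces to the single estimate
\[
\fint_{B_{\rho}(x)}I(z)\,\ud z=o(\log|x|)\qquad(|x|\to\infty).
\]

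To control this average I would apply Fubini and split the $y$-integration according to whether $y$ lies in the small ball $B_{2\rho}(x)$ (the near regime, where the singularity $z=y$ interacts with $B_\rho(x)$) or outside it (the far regime). For the near part, the averaged kernel obeys $\fint_{B_\rho(x)}\log\frac{1}{|z-y|}\,\ud z\lesssim \log\frac1\rho=r_2\log|x|$, a bound obtained from the elementary computation of $\int_{B_s}\log\frac1{|w|}\,\ud w$; multiplying by the local mass $\int_{B_{2\rho}(x)}|Q|e^{nu}\,\ud y$ gives a contribution $\lesssim r_2(\log|x|)\int_{B_{2\rho}(x)}|Q|e^{nu}\,\ud y$. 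For the far part, the singularity sits at distance $>\rho$ from $B_\rho(x)$, so $z\mapsto\log\frac1{|z-y|}$ is smooth there; a second-order Taylor expansion about $x$ (whose first-order term is annihilated by the symmetry of the ball) yields $\fint_{B_\rho(x)}\log\frac1{|z-y|}\,\ud z=\log\frac1{|x-y|}+O\!\left(\rho^2/|x-y|^2\right)$, where $\rho^2/|x-y|^2\le\frac14$ on the far region. Consequently the far contribution is controlled by $\big(\log\frac1{2\rho}\big)\int_{|y-x|<2}|Q|e^{nu}\,\ud y\lesssim r_2(\log|x|)\int_{|y-x|<2}|Q|e^{nu}\,\ud y$, up to an $O(1)\cdot\int_{|y-x|<2}|Q|e^{nu}$ remainder.

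The proof is then closed by the integrability $Qe^{nu}\in L^1(\mr^n)$: since the relevant balls $B_{2\rho}(x)$ and $B_2(x)$ lie in $\{|y|\ge |x|/2\}$ for $|x|$ large, the tail mass $\int_{|y|\ge |x|/2}|Q|e^{nu}\,\ud y\to 0$, so all of the above contributions have the form $r_2(\log|x|)\cdot o(1)=o(\log|x|)$, as required. I expect the main obstacle to be precisely this balance in the near regime: averaging the logarithmic singularity over a ball of the very small radius $|x|^{-r_2}$ produces the factor $\log\frac1\rho=r_2\log|x|$, which is of the same order as the quantity we are estimating, so the argument survives only because $r_2$ is a fixed constant and the growing logarithmic factor is exactly absorbed by the vanishing local mass. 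Care must also be taken that the domains $B_1(z)$ entering $I(z)$ vary with $z$, but since $\rho\ll 1$ they differ from $B_1(x)$ by an annulus of width $O(\rho)$ whose contribution is again dominated by the same vanishing tail mass.
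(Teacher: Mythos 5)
Your proof is correct, and since the paper does not reproduce an argument for this lemma (it is quoted verbatim from Lemma 2.6 of \cite{Li 23 Q-curvature}), the fair comparison is with the kernel-averaging technique used throughout Section \ref{section:asyptptic} (e.g.\ in the proof of Lemma \ref{lem: e^nu leq R-nalpha}), which is exactly what you do: average the representation of Lemma \ref{lem: L(f)} over $B_{|x|^{-r_2}}(x)$, use that the kernel $\log\frac{1}{|z-y|}$ is nonnegative on $\{|z-y|\le 1\}$ so its average over the small ball is at most $\log\frac{1}{\rho}+C= r_2\log|x|+C$, and absorb this growing factor with the vanishing tail mass $\int_{|y|\ge |x|/2}|Q|e^{nu}\,\ud y=o(1)$. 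You also correctly identified the two genuine delicacies --- the uniformity of the $o(1)$ in Lemma \ref{lem: L(f)} over $z\in B_\rho(x)$, and the $z$-dependence of the domain $B_1(z)$, whose discrepancy is an annulus where the kernel is $O(\rho)$ --- so nothing is missing.
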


By  slightly modifying  Lemma 2.8 in \cite{Li 23 Q-curvature}, we  obtain the following property.
\begin{lemma}\label{lem: e^nu leq R-nalpha}
	Consider the normal solution $u(x)$ to \eqref{normal solution}. For $R\gg 1$ and  any $m>0$ fixed, there holds
	$$\fint_{B_{R}(0)\backslash B_{R-1}(0)}e^{mu}\ud x = R^{-m\alpha+o(1)}.$$
\end{lemma}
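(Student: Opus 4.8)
The plan is to follow the proof of Lemma~2.8 in \cite{Li 23 Q-curvature}, tracking the exponent $m$ in place of $n$ throughout. Write $A_R:=B_R(0)\backslash B_{R-1}(0)$, so that $|A_R|\asymp R^{n-1}$, and recall from Lemma~\ref{lem: L(f)} the decomposition $u(x)=(-\alpha+o(1))\log|x|+v(x)$, where $v(x):=\frac{2}{(n-1)!|\mathbb{S}^n|}\int_{B_1(x)}\log\frac{1}{|x-y|}Q(y)e^{nu(y)}\,\ud y$ is the local log-potential. For $x\in A_R$ one has $|x|\in[R-1,R]$, so that $(-\alpha+o(1))\log|x|=(-\alpha+o(1))\log R$ with a uniform $o(1)$, and hence $e^{mu(x)}=R^{-m\alpha+o(1)}e^{mv(x)}$ on $A_R$. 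The problem thus reduces to the two-sided subpolynomial bound $\fint_{A_R}e^{mv}\,\ud x=R^{o(1)}$.

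For the lower bound I would argue directly, bypassing $v$. Fix a small $r_0\in(0,\tfrac12)$ and select a maximal family of disjoint balls $B_{r_0}(x_i)\subset A_R$ with $|x_i|=R-\tfrac12$; a packing count gives $N\asymp R^{n-1}$ such balls. Jensen's inequality combined with Lemma~\ref{lem: B_1 L(f)} yields $\int_{B_{r_0}(x_i)}e^{mu}\ge |B_{r_0}|\,e^{m\fint_{B_{r_0}(x_i)}u}=|B_{r_0}|\,R^{-m\alpha+o(1)}$, uniformly in $i$ because all $|x_i|\asymp R$. Summing over the disjoint balls and dividing by $|A_R|\asymp R^{n-1}$ gives $\fint_{A_R}e^{mu}\ge R^{-m\alpha+o(1)}$.

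The substantive half is the matching upper bound, and this is where the main difficulty lies. I would cover $A_R$ by $O(R^{n-1})$ unit balls $B_1(z_j)$ with $|z_j|\asymp R$ and prove that $\int_{B_1(z_j)}e^{mv}\,\ud x$ is bounded by a constant independent of $j$ and $R$; summation then gives $\int_{A_R}e^{mv}\lesssim R^{n-1}$, i.e. $\fint_{A_R}e^{mv}\lesssim 1$. On each such ball $v$ is the log-potential of the measure $Q e^{nu}\,\ud y$ restricted to $B_2(z_j)$, so the integrability of $e^{mv}$ is governed by an Adams--Moser--Trudinger type exponential estimate: since the kernel contributes a factor behaving like $|x-y|^{-\frac{2m}{(n-1)!|\mathbb{S}^n|}\beta_j}$ near its singularity, where $\beta_j:=\int_{B_2(z_j)}|Q|e^{nu}\,\ud y$ is the local mass, the bound $\int_{B_1(z_j)}e^{mv}\le C$ holds as soon as $\frac{2m}{(n-1)!|\mathbb{S}^n|}\beta_j<n$. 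The only place where the exponent $m$ (rather than $n$) enters nontrivially is this threshold, so the key point is to guarantee the required smallness of $\beta_j$ uniformly for $|z_j|$ large. Here the integrability hypothesis $Qe^{nu}\in L^1(\mr^n)$, together with the absolute convergence underlying the representation \eqref{normal solution}, enters and forces the local masses $\beta_j\to0$ as $|z_j|\to\infty$; consequently, for every fixed $m$ the threshold $\frac{2m}{(n-1)!|\mathbb{S}^n|}\beta_j<n$ is met once $R$ is large enough.

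Combining the two bounds yields $\fint_{A_R}e^{mv}=R^{o(1)}$, hence $\fint_{A_R}e^{mu}=R^{-m\alpha+o(1)}$, as claimed. The main obstacle is precisely the uniform exponential integrability of the local potential $v$: one must keep each local mass $\beta_j$ below the (now $m$-dependent) critical Adams value uniformly over the covering of the annulus, and then verify that the constant $C$ produced by the exponential estimate is genuinely independent of the center $z_j$ and of $R$, so that it survives summation over the $O(R^{n-1})$ balls without degrading the subpolynomial rate $R^{o(1)}$.
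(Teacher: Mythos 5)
Your proposal is correct and follows essentially the same route as the paper: a Jensen-inequality lower bound via Lemma \ref{lem: B_1 L(f)} on disjoint small balls, and an upper bound obtained by isolating the local log-potential from Lemma \ref{lem: L(f)} and exploiting that the local mass $\int_{B_{1/2}(x)}Q^+e^{nu}\,\ud y$ (your $\beta_j$) is small for $|x|$ large by $Qe^{nu}\in L^1(\mr^n)$, so that the exponentiated potential is uniformly integrable, followed by an $O(R^{n-1})$-ball covering of the annulus. The ``Adams--Moser--Trudinger type'' estimate you invoke is exactly what the paper proves inline via Jensen's inequality with the normalized measure $Q^+e^{nu}\,\ud z/\|Q^+e^{nu}\|_{L^1(B_{1/2}(x))}$ and Fubini's theorem, so there is no substantive difference in method.
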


\begin{proof}
	On one hand, with help of Jensen's inequality and Lemma \ref{lem: B_1 L(f)},  for any $r_3>0$ fixed, one has
	\begin{equation}\label{e^Lf lower bound}
		\fint_{B_{r_3}(x)}e^{mu}\ud y\geq \exp\left(\fint_{B_{r_3}(x)}mu\ud y\right)=e^{(-m\alpha+o(1))\log|x|}.
	\end{equation}
	Now, we are going to deal with  the upper bound.
	For $|y|\gg1$, there holds
	$$|\int_{B_1(y)\backslash B_{1/4}(y)}\log\frac{1}{|y-z|}Q(z)e^{nu(z)}\ud z|\leq C\int_{B_1(y)\backslash B_{1/4}(y)}|Q(z)|e^{nu}\ud z\leq C.$$
	Combining with Lemma \ref{lem: L(f)}, we obtain
	$$u(y)=(-\alpha +o(1))\log|y|+\frac{2}{(n-1)!|\mathbb{S}^n|}\int_{B_{1/4}(y)}\log\frac{1}{|y-z|}Q(z)e^{nu(z)}\ud z.$$
	Then for $|x|\gg 1$ and $y\in B_{1/4}(x)$,
	one has
	\begin{equation}\label{L (f)leq -alpha log|x|}
		u(y)\leq (-\alpha +o(1))\log|x|+\frac{2}{(n-1)!|\mathbb{S}^n|}\int_{B_{1/2}(x)}\log\frac{1}{|y-z|}Q^+(z)e^{nu(z)}\ud z
	\end{equation}
	where we have used  the fact $|y-z|\leq 1$.
	
	Now, we claim that for $|x|\gg 1$ and $m>0$, there holds
	\begin{equation}\label{e^nLf leq |x|^n-alpha}
		\int_{B_{1/4}(x)}e^{mu(y)}\ud y\leq e^{(-m\alpha +o(1))\log|x|}.
	\end{equation}
	If $Q^+(z)=0$ a.e. on $B_{1/2}(x)$, we immediately obtain \eqref{e^nLf leq |x|^n-alpha} due to \eqref{L (f)leq -alpha log|x|}.
	Otherwise, Jensen's inequality yields that
	\begin{align*}
		&\int_{B_{1/4}(x)}e^{mu}\ud y\\
		\leq & |x|^{-m\alpha+o(1)}\int_{B_{1/4}(x)}\exp\left(\frac{2m}{(n-1)!|\mathbb{S}^n|}\int_{B_{1/2}(x)}\log\frac{1}{|y-z|}Q^+(z)e^{nu(z)}\ud z\right)\ud y\\
		\leq & |x|^{-m\alpha+o(1)}\int_{B_{1/4}(x)}\int_{B_{1/2}(x)}|y-z|^{-\frac{2m\|Q^+e^{nu}\|_{L^1(B_{1/2}(x))}}{(n-1)!|\mathbb{S}^n|}}\frac{Q^+(z)e^{nu(z)}}{\|Q^+e^{nu}\|_{L^1(B_{1/2}(x))}}\ud z\ud y.
	\end{align*}
	Since $Qe^{nu}\in L^1(\mr^n)$, there exists $R_2>0$ such that $|x|\geq R_2$, we have $$\|Q^+e^{nu}\|_{L^1(B_{1/2}(x))}\leq \frac{(n-1)!|\mathbb{S}^n|}{4m}.$$
	By applying Fuibini's theorem,  we prove
	the claim \eqref{e^nLf leq |x|^n-alpha}.
	
	For $r_3>0 $ fixed,  we could  choose finite balls $1\leq j\leq C(r_3)$ such that $B_{r_3}(x)\subset \cup_j B_{1/4}(x_j)$ with $x_j\in B_{r_3}(x)$.
	Hence, using the estimate \eqref{e^nLf leq |x|^n-alpha}, for $|x|\gg 1$, we have
	$$	\fint_{B_{r_3}(x)}e^{mu}\ud y\leq C\sum_{j}\fint_{B_{1/4}(x_j)}e^{mu}\ud y\leq  Ce^{(-m\alpha+o(1))\log|x|}=e^{(-m\alpha+o(1))\log|x|}
	$$
	Combing with \eqref{e^Lf lower bound}, one has
	\begin{equation}\label{e^mu for B_r}
		\log\fint_{B_{r_3}(x)}e^{mu}\ud y= (-m\alpha+o(1))\log|x|.
	\end{equation}
	By a direct computation, we obtain the inequality $C^{-1}R^{n-1} \leq |B_{R}(0)\backslash B_{R-1}(0)| \leq C R^{n-1}$ for $R \gg 1$, where $C$ is independent of $R$. We can select an index $C^{-1}R^{n-1} \leq i_R \leq C R^{n-1}$ such that the balls $B_{1/4}(x_j)$ with $|x_j| = R-\frac{1}{2}$ and $1 \leq j \leq i_R$ are pairwise disjoint, and the sum of the balls $B_4(x_j)$ cover the annulus $B_{R}(0)\backslash B_{R-1}(0)$. Applying the estimate  \eqref{e^mu for B_r},  we obtain the following result
	\begin{align*}
		\fint_{B_{R}(0)\backslash B_{R-1}(0)}e^{mu}\ud y\leq & \frac{1}{C^{-1} R^{n-1}}\sum_{j=1}^{i_R}\int_{B_4(x_j)}e^{mu}\ud y\\
		\leq & CR^{1-n}\sum_{j=1}^{i_R}|x_j|^{-n\alpha+o(1)}\\
		\leq &CR^{1-n}\cdot CR^{n-1}\cdot (R-\frac{1}{2})^{-m\alpha +o(1)}\\
		=&R^{-m\alpha +o(1)}.
	\end{align*}
	Similarly, there holds
	$$ \fint_{B_{R}(0)\backslash B_{R-1}(0)}e^{mu}\ud y \geq \frac{1}{C R^{n-1}}\sum_{j=1}^{i_R}\int_{B_{1/4}(x_j)}e^{mu}\ud y= R^{-m\alpha+o(1)}.$$
	
	Finally, we get the desired result.

\end{proof}	
\begin{lemma} \label{lem: e^mu leq R-malpha}
	
	For $R\gg1$ and $m>0$ fixed,
	there holds
	$$\int_{B_{\frac{3R}{2}}(0)\backslash B_{\frac{R}{2}}(0)}e^{mu}\ud x=R^{n-m\alpha+o(1)}.$$
\end{lemma}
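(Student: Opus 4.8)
The plan is to bootstrap the thin-annulus estimate of Lemma \ref{lem: e^nu leq R-nalpha} to the fat annulus by a summation argument, working throughout in the logarithmic scale (where the statement $=R^{-m\alpha+o(1)}$ means $\log(\cdot)/\log R\to -m\alpha$). As a first step I would upgrade the averaged estimate to an integral estimate over a unit-width annulus. Since the previous proof already records $C^{-1}R^{n-1}\leq |B_R(0)\backslash B_{R-1}(0)|\leq CR^{n-1}$ for $R\gg1$, multiplying the conclusion of Lemma \ref{lem: e^nu leq R-nalpha} by this volume gives
\begin{equation}\label{thin annulus integral}
	\int_{B_R(0)\backslash B_{R-1}(0)}e^{mu}\ud x = R^{n-1-m\alpha+o(1)},
\end{equation}
because the polynomial factor $R^{n-1}$ only shifts the exponent and is absorbed into the logarithmic asymptotics.

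Next I would decompose the fat annulus into unit-width annuli and sum \eqref{thin annulus integral}. Writing $A_k:=B_k(0)\backslash B_{k-1}(0)$, one covers $B_{3R/2}(0)\backslash B_{R/2}(0)$ by the annuli $A_k$ with integer $k$ ranging over the interval $[R/2,\,3R/2]$ (adjusting the two endpoints by at most one annulus each, whose contribution is negligible in the logarithmic scale). There are $\asymp R$ such annuli, and every radius in the range satisfies $R/2\leq k\leq 3R/2$, hence $k\asymp R$. The idea is then that
$$\int_{B_{3R/2}(0)\backslash B_{R/2}(0)}e^{mu}\ud x = \sum_{R/2\leq k\leq 3R/2}\int_{A_k}e^{mu}\ud x,$$
where each summand is $k^{n-1-m\alpha+o(1)}$, so that roughly $R$ terms of size $R^{n-1-m\alpha+o(1)}$ combine to give the claimed $R^{n-m\alpha+o(1)}$.

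The one point requiring care, and the main obstacle, is the uniformity of the $o(1)$ across the terms being summed: the error in \eqref{thin annulus integral} is a function $\varepsilon(k)\to 0$ of the annulus radius $k$ alone, and I must ensure it is controlled simultaneously for all $k$ in the range. I would handle this by the standard $\varepsilon$-threshold argument, treating the two bounds separately. Given $\varepsilon>0$, pick $K_0$ so that $k^{n-1-m\alpha-\varepsilon}\leq \int_{A_k}e^{mu}\leq k^{n-1-m\alpha+\varepsilon}$ for all $k\geq K_0$; then for $R$ large enough that $R/2\geq K_0$, every relevant $k$ obeys $R/2\leq k\leq 3R/2$, so $\int_{A_k}e^{mu}$ lies between $cR^{n-1-m\alpha-\varepsilon}$ and $CR^{n-1-m\alpha+\varepsilon}$ with constants depending only on $n,m,\alpha,\varepsilon$. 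Summing over the $\asymp R$ annuli yields
$$c'\,R^{n-m\alpha-\varepsilon}\leq \int_{B_{3R/2}(0)\backslash B_{R/2}(0)}e^{mu}\ud x\leq C'\,R^{n-m\alpha+\varepsilon},$$
and letting $\varepsilon\to0$ gives the desired identity $R^{n-m\alpha+o(1)}$. The whole argument is therefore elementary once Lemma \ref{lem: e^nu leq R-nalpha} is in hand; no new analysis of $u$ is needed, only the bookkeeping of the logarithmic asymptotics.
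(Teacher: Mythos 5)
Your proposal is correct and follows essentially the same route as the paper: both convert Lemma \ref{lem: e^nu leq R-nalpha} into an integral estimate over unit-width annuli with an $\varepsilon$-threshold (uniform for all radii beyond some $R_1$), decompose $B_{3R/2}(0)\setminus B_{R/2}(0)$ into $\asymp R$ such annuli, and sum to get $R^{n-m\alpha\pm\varepsilon}$ on both sides. The only differences are cosmetic (indexing the annuli as $B_k\setminus B_{k-1}$ versus $B_{i+1}\setminus B_i$, and your slightly more explicit treatment of the endpoint annuli and of the uniformity of the $o(1)$).
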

\begin{proof}
	On one hand,	with help of Lemma \ref{lem: e^nu leq R-nalpha}, for any $\epsilon>0$, there exists $R_1>0$ such that $R\geq R_1$,
	\begin{equation}\label{int A_R e^nu}
		R^{n-1-m\alpha-\epsilon}\leq\int_{B_{R+1}\backslash B_R(0)}e^{mu}\ud x\leq R^{n-1-m\alpha+\epsilon}.
	\end{equation}
	Using the above estimate \eqref{int A_R e^nu},  for $R\geq 2R_1+2$, there holds
	\begin{align*}
		\int_{B_{\frac{3R}{2}}(0)\backslash B_{\frac{R}{2}}(0)}e^{mu}\ud x \leq& \sum^{[\frac{3R}{2}]}_{i=[\frac{R}{2}]}\int_{B_{i+1}(0)\backslash B_i(0)}e^{mu}\ud x\\
		\leq &\sum^{[\frac{3R}{2}]}_{i=[\frac{R}{2}]}i^{n-1-m\alpha+\epsilon}\\
		\leq & CR\cdot R^{n-1-m\alpha+\epsilon}\\
		\leq &CR^{n-m\alpha+\epsilon}
	\end{align*}
	
	On the other hand,  for $R\geq 2R_1+2$, we have
	\begin{align*}
		\int_{B_{\frac{3R}{2}}(0)\backslash B_{\frac{R}{2}}(0)}e^{mu}\ud x \geq& \sum^{[\frac{3R}{2}]-1}_{i=[\frac{R}{2}]+1}\int_{B_{i+1}(0)\backslash B_i(0)}e^{mu}\ud x\\
		\geq &\sum^{[\frac{3R}{2}]-1}_{i=[\frac{R}{2}]+1}i^{n-1-m\alpha_0-\epsilon}\\
		\geq & CR\cdot R^{n-1-m\alpha+\epsilon}\\
		=&CR^{n-m\alpha-\epsilon}.
	\end{align*}
	Thus we finish our proof.	
\end{proof}

The following lemma has also been established in \cite{Li 23 Q-curvature}. For readers' convenience, we give a brief proof here.
\begin{lemma}\label{lem: alpha  geq 1}
	(Theorem 2.18 in \cite{Li 23 Q-curvature}) Consider the normal solution $u(x)$  to \eqref{normal solution} with even integer $n\geq2$. Supposing  that the  volume is finite i.e.
	$e^{nu}\in L^1(\mr^n),$
	then there holds
	$$\int_{\mr^n}Qe^{nu}\ud x\geq \frac{(n-1)!|\mathbb{S}^n|}{2}.$$
\end{lemma}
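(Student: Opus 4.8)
The goal is to show $\int_{\mr^n} Qe^{nu}\,\ud x \geq \frac{(n-1)!|\mathbb{S}^n|}{2}$, which in the normalized notation just says $\alpha \geq \frac12$. The plan is to leverage the asymptotic estimates already assembled in this section, which all express the growth (or decay) of $u$ and of $e^{mu}$ in terms of the single exponent $\alpha$. The key observation is that the finiteness hypothesis $e^{nu}\in L^1(\mr^n)$ forces the integral $\int_{\mr^n} e^{nu}\,\ud x$ to converge, and by Lemma \ref{lem: e^mu leq R-malpha} (with $m=n$) the mass of $e^{nu}$ on the dyadic annulus $B_{3R/2}(0)\backslash B_{R/2}(0)$ behaves like $R^{n-n\alpha+o(1)}$.

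First I would apply Lemma \ref{lem: e^mu leq R-malpha} with $m=n$, giving
\begin{equation*}
	\int_{B_{\frac{3R}{2}}(0)\backslash B_{\frac{R}{2}}(0)}e^{nu}\,\ud x = R^{n-n\alpha+o(1)},\qquad R\gg 1.
\end{equation*}
If $\alpha < 1$, then $n - n\alpha > 0$, so the annular masses along a dyadic sequence $R_k = 2^k$ do not decay; in fact the tail $\int_{\mr^n\backslash B_{R}(0)} e^{nu}\,\ud x$ is bounded below by a sum of terms of order $R_k^{n-n\alpha+o(1)}$ which diverges, contradicting $e^{nu}\in L^1$. Hence the integrability hypothesis already yields $\alpha \geq 1$, which is far stronger than $\alpha \geq \tfrac12$.

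Here I suspect the stated bound $\alpha \geq \tfrac12$ is deliberately weaker than what integrability gives (the "$\tfrac12$" is the relevant threshold for the later volume-comparison arguments, and the lemma is quoted in a form uniform across the paper), so the clean route is simply to record $\alpha \geq 1 > \tfrac12$. To make the argument airtight I would fix $\epsilon>0$ small enough that, were $\alpha<1$, we still have $n-n\alpha-\epsilon>0$; then for a geometric sequence of radii the annular integrals are bounded below by $c\,R_k^{n-n\alpha-\epsilon}\to\infty$, and since these annuli can be chosen disjoint (e.g.\ $B_{2^{k+1}}\backslash B_{2^k}$, which Lemma \ref{lem: e^mu leq R-malpha} also controls after a trivial rescaling of the statement) their masses sum to a divergent series inside the finite quantity $\int_{\mr^n} e^{nu}\,\ud x$.

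The main obstacle is purely bookkeeping: Lemma \ref{lem: e^mu leq R-malpha} is stated for the fixed annulus $B_{3R/2}\backslash B_{R/2}$, so I must either re-derive the analogous estimate on disjoint dyadic annuli $B_{2R}\backslash B_R$ (which follows verbatim from the annular estimate \eqref{int A_R e^nu} in the proof of that lemma) or instead choose overlapping annuli along $R_k=(3/2)^k$ and absorb the bounded overlap multiplicity into the constant. Either way, the contribution of each annulus is $\gtrsim R_k^{n(1-\alpha)-\epsilon}$, and finiteness of the total integral forces $n(1-\alpha)-\epsilon \leq 0$ for all small $\epsilon$, i.e.\ $\alpha\geq 1$. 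No delicate estimate beyond the asymptotics already proved in this section is required; the only care needed is ensuring the chosen annuli are disjoint so that summing their masses genuinely lower-bounds $\int_{\mr^n}e^{nu}\,\ud x$.
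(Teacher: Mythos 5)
Your proof is correct and runs on essentially the same mechanism as the paper's: the paper applies Jensen's inequality together with Lemma \ref{lem:B_r_0|x|} to get $\int_{B_{|x|/2}(x)}e^{nu}\,\ud y\geq C|x|^{n-n\alpha+o(1)}$ and lets $|x|\to\infty$, so that finiteness of the volume forces $\alpha\geq 1$; your route through the annular estimate of Lemma \ref{lem: e^mu leq R-malpha} (itself derived from the same Jensen argument) reaches the identical conclusion, and in fact a single annulus of mass $R^{n-n\alpha+o(1)}\to\infty$ already contradicts $e^{nu}\in L^1(\mr^n)$, so your disjointness bookkeeping is superfluous. One correction to your framing: since $\alpha=\frac{2}{(n-1)!|\mathbb{S}^n|}\int_{\mr^n}Qe^{nu}\,\ud x$, the conclusion of the lemma is \emph{equivalent} to $\alpha\geq 1$, not to $\alpha\geq\frac12$; the bound $\alpha\geq 1$ you obtain is therefore not ``far stronger'' than required but is precisely the assertion, and your argument succeeds only because it delivers the full $\alpha\geq 1$.
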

\begin{proof}
	With help of Lemma \ref{lem:B_r_0|x|} and  $|x|\gg1$, Jensen's inequality implies that
	\begin{align*}
		&\int_{B_{|x|/2}(x)}e^{nu}\ud y\\
		\geq &|B_{|x|/2}(x)|\exp(\frac{1}{|B_{|x|/2}(x)|}\int_{B_{|x|/2}(x)}nu\ud y)\\
		=&C|x|^{n-n\alpha+o(1)}.
	\end{align*}
	Based on the assumption
	$e^{nu}\in L^1(\mr^n)$, letting $|x|\to\infty$,  there holds
	$\alpha\geq 1$
	i.e.
	$$\int_{\mr^n}Qe^{nu}\ud x\geq \frac{(n-1)!|\mathbb{S}^n|}{2}.$$
	Finally, we finish our proof.
\end{proof}

 Chang, Qing and Yang  \cite{CQY} firstly give a geometric criterion  about scalar curvature for normal solutions. More general results can be   found in  \cite{Wang-Wang} and \cite{Li 23 Q-curvature}.
\begin{lemma}\label{lema: normal}(Theorem 1.1 in \cite{Wang-Wang}, Theorem 4.1 in \cite{Li 23 Q-curvature})
	Given a smooth conformal metric $g=e^{2u}|dx|^2$ on $\mr^n$ where $n\geq 4$ is an even integer. The Q-curvature satisfies the equation \eqref{equ:-Delta ^n/2u=Ke^nu} with $Qe^{nu}\in L^1(\mr^n).$ If the scalar curvature $R_g$ defined by
	$$R_g=2(n-1)e^{-2u}(-\Delta u-\frac{n-2}{2}|\nabla u|^2)$$
	satisfies that $R_g\geq 0$ near infinity, then the solution to \eqref{equ:-Delta ^n/2u=Ke^nu} is normal.
\end{lemma}

With help of  above lemmas,  we are able to  prove Theorem \ref{thm: Q leq n-1 R geq0 half S^n}.

{\bf Prooof of Theorem \ref{thm: Q leq n-1 R geq0 half S^n}:}

Firstly, due the assumption $R_g\geq0$ near infinity, Lemma \ref{lema: normal} shows that $u(x)$ is normal.

	If $\int_{\mr^n}e^{nu}\ud x=+\infty$, it is trivial that \eqref{volume S^n/2} holds.	Now, we suppose that $e^{nu}\in L^1(\mr^n)$.
	On one hand, if $Q(x)$ is a constant, with help  of Lemma \ref{lem: alpha  geq 1},  $Q(x)$ must be a positive constant. Based on the classification theorem for normal solutions in \cite{Lin}, \cite{WX}, \cite{Mar MZ} or \cite{Xu05}, one has
	$$\int_{\mr^n}Qe^{nu}\ud x=(n-1)!|\mathbb{S}^n|.$$
	Our  assumption $Q\leq (n-1)!$ yields that
	$$\int_{\mr^n}e^{nu}\ud x\geq |\mathbb{S}^n|>\frac{|\mathbb{S}^n|}{2}.$$
	On the other hand, if $Q(x)$ is not a constant, using Lemma \ref{lem: alpha  geq 1} again, there holds
	$$\int_{\mr^n}e^{nu}\ud x>\frac{1}{(n-1)!}\int_{\mr^n}Qe^{nu}\ud x\geq \frac{|\mathbb{S}^n|}{2}.$$
	
	Finally, we finish our proof.

\section{Pohozaev identity}\label{section: Pohozaev identity}
The following Pohozaev-type  inequality is based on the work of Xu (See Theorem 2.1 in \cite{Xu05}).
\begin{lemma}\label{lem: Pohozaev for non-sign changing} Suppose that $u(x)$ is a normal solution to \eqref{normal solution} with $Q(x)$ doesn't change sign near infinity. Then there exists a sequence $R_i\to\infty$ such that
	$$\lim_{i\to \infty}\sup \frac{4}{n!|\mathbb{S}^n|}\int_{B_{R_i}(0)}x\cdot \nabla Q e^{nu}\ud x\leq \alpha(\alpha-2).$$
\end{lemma}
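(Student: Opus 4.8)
The plan is to establish a Pohozaev-type identity for the normal solution $u$ on balls $B_R(0)$ and then pass to a limit along a carefully chosen sequence $R_i\to\infty$, controlling the boundary terms using the asymptotic estimates from Section~\ref{section:asyptptic}. First I would recall the basic structure of Pohozaev identities for the operator $(-\Delta)^{n/2}$: multiplying the equation $(-\Delta)^{n/2}u=Qe^{nu}$ by the scaling vector field action $x\cdot\nabla u$ and integrating by parts over $B_R(0)$. The left-hand side, after repeated integration by parts, produces a combination of boundary integrals over $\partial B_R(0)$ involving derivatives of $u$ up to order $n$; the right-hand side, after integrating $x\cdot\nabla u\, e^{nu}$ and using the divergence theorem, yields $\frac{1}{n}\int_{B_R}(x\cdot\nabla Q)e^{nu}\,\ud x$ together with a boundary term $-\frac{1}{n}\int_{\partial B_R}(x\cdot\nu)Qe^{nu}\,\ud\sigma$ and a bulk term $-\int_{B_R}Qe^{nu}\,\ud x$ coming from the $\mathrm{div}(x)=n$ factor.

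The core identity I expect takes the schematic form
\begin{equation}\label{eq:plan-pohozaev}
\frac{1}{n}\int_{B_R(0)}x\cdot\nabla Q\,e^{nu}\,\ud x = -\int_{B_R(0)}Qe^{nu}\,\ud x + \mathcal{B}(R),
\end{equation}
where $\mathcal{B}(R)$ collects all boundary integrals over $\partial B_R(0)$. The next step is to recognize, via the definition of $\alpha$, that $\frac{2}{(n-1)!|\mathbb{S}^n|}\int_{\mr^n}Qe^{nu}\,\ud x=\alpha$, so the bulk term on the right of \eqref{eq:plan-pohozaev} tends to $-\frac{(n-1)!|\mathbb{S}^n|}{2}\alpha$ as $R\to\infty$. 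To land on the target bound $\alpha(\alpha-2)$ after multiplying by $\frac{4}{n!|\mathbb{S}^n|}$, the boundary terms must contribute a leading quantity matching $\frac{n!|\mathbb{S}^n|}{4}\alpha^2$ in the limit. The mechanism for this is that Lemma~\ref{lem: L(f)} gives the leading asymptotic $u(x)\sim -\alpha\log|x|$, whose radial derivative is $\partial_r u\sim -\alpha/r$; substituting this model into the boundary integrals reproduces exactly the $\alpha^2$ coefficient, since each factor of $x\cdot\nabla u$ on $\partial B_R$ contributes an $-\alpha$.

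The key steps in order are therefore: (i) derive \eqref{eq:plan-pohozaev} rigorously by iterated integration by parts, carefully bookkeeping the boundary terms of all orders; (ii) insert the asymptotic expansions of $u$ and its derivatives from Lemmas~\ref{lem: L(f)}, \ref{lem: B_1 L(f)}, and the volume/integral estimates of Lemmas~\ref{lem: e^nu leq R-nalpha} and \ref{lem: e^mu leq R-malpha} to estimate $\mathcal{B}(R)$; and (iii) exploit the hypothesis that $Q$ does not change sign near infinity to guarantee that the sharp term survives while error terms are controlled, then select a good sequence $R_i$. The sign condition on $Q$ is what lets one sign the boundary contribution of $Qe^{nu}$ on $\partial B_R$ and obtain an inequality rather than needing an exact evaluation, which is precisely why the conclusion is stated with $\limsup$ and $\leq$.

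The main obstacle will be step (ii): the higher-order boundary terms are not directly controlled by the averaged estimates of Section~\ref{section:asyptptic}, because those lemmas control averages of $u$ over balls and integrals of $e^{mu}$ over annuli, but not pointwise derivatives of $u$ of order up to $n$ on a fixed sphere. The standard remedy — and what I would pursue — is to avoid needing pointwise derivative control by choosing the radii $R_i$ so that sphere averages of the relevant derivative quantities are well behaved; concretely, one integrates the identity over a dyadic annulus in $R$ and uses the mean value / pigeonhole principle to extract a sequence $R_i$ along which the boundary terms converge to their expected leading order with negligible error. This is exactly the role of the $\limsup$ along a sequence in the statement, and reconciling the averaged asymptotics with the pointwise-looking boundary integrals is the delicate technical heart of the proof.
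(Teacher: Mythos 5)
Your proposal follows the classical PDE route---multiply $(-\Delta)^{n/2}u=Qe^{nu}$ by $x\cdot\nabla u$ and integrate by parts over $B_R$---but this is not what the paper does, and as outlined it has two genuine gaps. The paper (following Xu) never integrates the equation by parts at all: it differentiates the integral representation \eqref{normal solution} to get the potential formula
\begin{equation*}
\langle x,\nabla u\rangle=-\frac{2}{(n-1)!|\mathbb{S}^n|}\int_{\mr^n}\frac{\langle x,x-y\rangle}{|x-y|^2}Q(y)e^{nu(y)}\,\ud y,
\end{equation*}
multiplies by $Qe^{nu}$, integrates over $B_R$, and symmetrizes the kernel via $x=\tfrac12\left((x+y)+(x-y)\right)$ together with the antisymmetry of $\frac{|x|^2-|y|^2}{|x-y|^2}$. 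Crucially, the $\alpha^2$ term then arises from a \emph{bulk} double integral ($-\tfrac{\alpha}{2}\int_{B_R}Qe^{nu}$), not from boundary terms; the only boundary term in the entire proof is $\tfrac{R}{n}\int_{\partial B_R}Qe^{nu}\,\ud\sigma$, which is killed along a pigeonhole sequence using $Qe^{nu}\in L^1(\mr^n)$.

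The first gap is your step (ii): your boundary terms are quadratic in derivatives of $u$ up to order $n-1$ on $\partial B_R$, and the paper's Section \ref{section:asyptptic} provides no control on \emph{any} derivative of $u$ (only ball averages of $u$ and annulus integrals of $e^{mu}$); under the sole hypothesis $Qe^{nu}\in L^1(\mr^n)$, pointwise asymptotics $\nabla^k u\sim-\alpha\nabla^k\log|x|$ are genuinely false, since mass of $Qe^{nu}$ concentrating near a large sphere destroys them. Your pigeonhole remedy estimates each boundary term separately, but when you express those terms through the integral representation (the only available tool), each becomes a double integral of $Qe^{nu}\otimes Qe^{nu}$ against kernels with total singularity of order $n$ on the sphere (e.g. $|x-y_1|^{-1}|x-y_2|^{-(n-1)}$), and the annulus averages of such terms are \emph{not} bounded by $\bigl(\int_{\mathrm{ann}}|Q|e^{nu}\bigr)^2$: near-diagonal concentration of mass produces logarithmically divergent factors. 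Only the full signed combination of boundary terms is controlled, and identifying that combination is precisely the paper's symmetrized bulk computation---so what you defer as the ``delicate technical heart'' is the actual content of the proof, not a checkable detail. The second gap is that your use of the hypothesis is misplaced: you invoke the fixed sign of $Q$ near infinity to sign the flux term $-\tfrac1n\int_{\partial B_R}(x\cdot\nu)Qe^{nu}\,\ud\sigma$, but that term vanishes along a pigeonhole sequence regardless of sign (as in the paper), and in the allowed case $Q\leq0$ near infinity its sign goes the \emph{wrong} way for your inequality. In the paper the sign hypothesis is used on a bulk interaction term: after symmetrization, the kernel $\frac{|x|^2-|y|^2}{|x-y|^2}$ is $\leq0$ for $|x|\leq R\leq|y|$, so the interaction $I_3$ between mass in $B_R\setminus B_{R/2}$ and mass in $B_{2R}\setminus B_R$ satisfies $I_3\leq0$ whenever $Q(x)Q(y)\geq0$ there; discarding it is what produces the one-sided bound with $\limsup$. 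Your outline has no counterpart of this step.
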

\begin{proof}
	By a direct computation, one has
	\begin{equation}\label{equ:x,nabla u}
		\langle x,\nabla u\rangle=-\frac{2}{(n-1)!|\mathbb{S}^n|}\int_{\mr^n}\frac{\langle x,x-y\rangle}{|x-y|^2}Q(y)e^{nu(y)}\ud y
	\end{equation}
	Multiplying by $Qe^{nu(x)}$ and integrating over the ball $B_R(0)$ for any $R>0$, we have
	\begin{equation}\label{equ:ingegrate x,nabla u}
		\int_{B_R(0)}Qe^{nu(x)}\left[-\frac{2}{(n-1)!|\mathbb{S}^n|}\int_{\mr^n}\frac{\langle x,x-y\rangle}{|x-y|^2}Q(y)e^{nu(y)}\ud y\right]\ud x=\int_{B_R(0)}Qe^{nu(x)}\langle x,\nabla u(x)\rangle\ud x.
	\end{equation}
	With $x=\frac{1}{2}\left((x+y)+(x-y)\right)$, for the left-hand side of \eqref{equ:ingegrate x,nabla u}, one has the following identity
	\begin{align*}
		LHS=&\frac{1}{2}\int_{B_R(0)}Qe^{nu(x)}\left[-\frac{2}{(n-1)!|\mathbb{S}^n|}\int_{\mr^n}Qe^{nu(y)}\ud y\right]\ud x   \\
		&+\frac{1}{2}\int_{B_R(0)}Qe^{nu(x)}\left[-\frac{2}{(n-1)!|\mathbb{S}^n|}\int_{\mr^n}\frac{\langle x+y,x-y\rangle}{|x-y|^2}Qe^{nu(y)}\ud y\right]\ud x.
	\end{align*}
	Now, we deal with  the last term of above equation by changing variables $x$ and $y$.
	\begin{align*}
		&	\int_{B_R(0)}Q(x)e^{nu(x)}\left[\int_{\mr^n}\frac{\langle x+y,x-y\rangle}{|x-y|^2}Q(y)e^{nu(y)}\ud y\right]\ud x\\
		=&\int_{B_R(0)}Q(x)e^{nu(x)}\left[\int_{\mr^n\backslash B_R(0)}\frac{\langle x+y,x-y\rangle}{|x-y|^2}Q(y)e^{nu(y)}\ud y\right]\ud x\\
		=&\int_{ B_{R/2}(0)}Q(x)e^{nu(x)}\left[\int_{\mr^n\backslash B_R(0)}\frac{\langle x+y,x-y\rangle}{|x-y|^2}Q(y)e^{nu(y)}\ud y\right]\ud x\\
		&+\int_{B_R(0)\backslash B_{R/2}(0)}Q(x)e^{nu(x)}\left[\int_{\mr^n\backslash B_{2R}(0)}\frac{\langle x+y,x-y\rangle}{|x-y|^2}Q(y)e^{nu(y)}\ud y\right]\ud x\\
		&+\int_{B_R(0)\backslash B_{R/2}(0)}Q(x)e^{nu(x)}\left[\int_{B_{2R(0)}\backslash B_{R}(0)}\frac{\langle x+y,x-y\rangle}{|x-y|^2}Q(y)e^{nu(y)}\ud y\right]\ud x\\
		=:&I_1(R)+I_2(R)+I_3(R).
	\end{align*} Notice that
	$$|I_1|\leq 3\int_{ B_{R/2}(0)}|Q(x)|e^{nu(x)}\ud x\int_{\mr^n\backslash B_R(0)}|Q(y)|e^{nu(y)}\ud y $$
	and
	$$|I_2|\leq 3\int_{ B_R(0)\backslash B_{R/2}(0)}|Q(x)|e^{nu(x)}\ud x\int_{\mr^n\backslash B_{2R}(0)}|Q(y)|e^{nu(y)}\ud y.$$
	Then both $|I_1|$ and $|I_2|$ tend to zero as $R\to \infty$ due to $Qe^{nu}\in L^1(\mr^n)$.
	Now,  we only need to  deal with  the term  $I_3$.
	Since $Q$ doesn't change sign near infinity, for $R\gg 1$,
	$$I_3(R)=\int_{B_R(0)\backslash B_{R/2}(0)}Q(x)e^{nu(x)}\left[\int_{B_{2R(0)}\backslash B_{R}(0)}\frac{|x|^2-|y|^2}{|x-y|^2}Q(y)e^{nu(y)}\ud y\right]\ud x\leq 0.$$
	As for the right-hand side of \eqref{equ:ingegrate x,nabla u}, by using divergence theorem, we have
	\begin{align*}
		RHS=&\frac{1}{n}\int_{B_R(0)}Q(x)\langle x,\nabla e^{nu(x)}\rangle\ud x   \\
		=&-\int_{B_R(0)}\left( Q(x)+\frac{1}{n}\langle x,\nabla Q(x)\rangle\right)e^{nu(x)}\ud x\\
		&+\frac{1}{n}\int_{\partial B_R(0)}Q(x)e^{nu(x)}R\ud \sigma.
	\end{align*}
	Since $Q(x)e^{nu(x)}\in L^1(\mr^n)$, there exist a sequence $R_i\to \infty$ such that
	$$\lim_{i\to\infty}R_i\int_{\partial B_{R_i}(0)}|Q|e^{nu}\ud\sigma=0.$$
	Otherwise,  there exists $\epsilon_0>0$ such that for large $R_1>1$ and any  $r\geq R_1$, there holds
	$|\int_{\partial B_r(0)}|Q|e^{nu}\ud \sigma|\geq \frac{\epsilon_0}{r}$	 and then
	$$|\int_{0}^R\int_{\partial B_r(0)}|Q|e^{nu}\ud\sigma\ud r|\geq -C+\int_{R_1}^R\frac{\epsilon_0}{r}\ud r\geq-C+\epsilon_0\log R$$
	which contradicts to $Qe^{nu}\in L^1(\mr^n)$.
	Thus there holds
	\begin{align*}
		\frac{1}{n}\int_{B_{R_i}(0)}\langle x\cdot \nabla Q\rangle e^{nu}\ud x=&-\int_{B_{R_i}(0)}Qe^{nu}\ud x+\frac{1}{n}R_i\int_{\partial B_{R_i}(0)}Qe^{nu}\ud \sigma
		+\frac{\alpha}{2}\int_{B_{R_i}(0)}Qe^{nu(x)}\ud x \\
		&+I_1(R_i)+I_2(R_i)+I_3(R_i)\\
		\leq &-\int_{B_{R_i}(0)}Qe^{nu}\ud x+\frac{1}{n}R_i\int_{\partial B_{R_i}(0)}Qe^{nu}\ud \sigma
		+\frac{\alpha}{2}\int_{B_{R_i}(0)}Qe^{nu(x)}\ud x \\
		&+I_1(R_i)+I_2(R_i)
	\end{align*}
	which yields that
	$$\lim_{i\to \infty}\sup \frac{4}{n!|\mathbb{S}^n|}\int_{B_{R_i}(0)}x\cdot \nabla Q e^{nu}\ud x\leq \alpha(\alpha-2).$$
	
\end{proof}

\begin{corollary}\label{cor: for n=2}
	Consider the  smooth conformally invariant   equation
	\begin{equation}\label{Gaussian equation}
		-\Delta u=Ke^{2u} \quad \mathrm{on}\;\;\mr^2
	\end{equation}
	with $Ke^{2u}\in L^1(\mr^2)$ and $e^{2u}\in L^1(\mr^2)$. Suppose that $x\cdot\nabla K\geq 0$ and $K(x)$ is non-negative near infinity. Then
	$$\int_{\mr^2}Ke^{2u}\ud x\geq 4\pi$$
	with $"="$ holds if and only if $K$ is a positive constant.
\end{corollary}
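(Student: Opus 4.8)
The plan is to reduce the problem to the normal-solution framework of \eqref{normal solution} with $n=2$ and $Q=K$, and then play the Pohozaev inequality of Lemma~\ref{lem: Pohozaev for non-sign changing} against the sign condition $x\cdot\nabla K\geq 0$. First I would record the two relevant specializations to $n=2$: $(n-1)!\,|\mathbb{S}^n|=4\pi$, so that $\alpha=\frac{1}{2\pi}\int_{\mr^2}Ke^{2u}\,\ud x$ and the target $\int_{\mr^2}Ke^{2u}\,\ud x\geq 4\pi$ is equivalent to $\alpha\geq 2$; and $n!\,|\mathbb{S}^n|=8\pi$, which governs the constant in the Pohozaev bound.

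The first genuine step is to promote $u$ to a normal solution. Since $Ke^{2u}\in L^1(\mr^2)$, I set $v(x)=\frac{1}{2\pi}\int_{\mr^2}\log\frac{|y|}{|x-y|}K(y)e^{2u(y)}\,\ud y$; as the two–dimensional fundamental solution of $-\Delta$ is $-\frac{1}{2\pi}\log|x|$, this $v$ satisfies $-\Delta v=Ke^{2u}$, so $h:=u-v$ is an entire harmonic function and $u=v+h$. I would then use the finite–area hypothesis $e^{2u}\in L^1(\mr^2)$ to show $h$ is constant: combining the mean-value property $\fint_{\partial B_r(0)}h\,\ud\sigma=h(0)$ with the averaged logarithmic asymptotics $\fint_{B_{r_0}(x)}u\approx(-\alpha+o(1))\log|x|$ and Jensen's inequality forces both $\alpha>1$ and, via a Liouville-type argument (a nonconstant entire harmonic function grows too fast for $e^{2h}e^{2v}$ to be integrable against the only polynomially decaying weight $e^{2v}\sim|x|^{-2\alpha}$), the constancy of $h$. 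Absorbing that constant shows $u$ is a normal solution with $Q=K$. This reduction is the main obstacle: every later step is a short deduction from the already-established lemmas, whereas here one must invoke the classical two–dimensional principle that finite total curvature together with finite area makes the solution normal.

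With $u$ normal, the sign hypothesis does the rest. Because $K$ is non-negative near infinity it does not change sign there, so Lemma~\ref{lem: Pohozaev for non-sign changing} applies and yields a sequence $R_i\to\infty$ with
$$\limsup_{i\to\infty}\frac{1}{2\pi}\int_{B_{R_i}(0)}x\cdot\nabla K\,e^{2u}\,\ud x\leq \alpha(\alpha-2).$$
Since $x\cdot\nabla K\geq 0$ and $e^{2u}>0$, the map $R\mapsto\int_{B_R(0)}x\cdot\nabla K\,e^{2u}\,\ud x$ is non-decreasing and non-negative, so its limit along $R_i$ is $\int_{\mr^2}x\cdot\nabla K\,e^{2u}\,\ud x\geq 0$; hence $\alpha(\alpha-2)\geq 0$, giving the dichotomy $\alpha\leq 0$ or $\alpha\geq 2$. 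To exclude the first alternative I would apply Lemma~\ref{lem: alpha  geq 1}, valid at $n=2$: normality together with $e^{2u}\in L^1(\mr^2)$ gives $\alpha\geq 1>0$, leaving only $\alpha\geq 2$, i.e. $\int_{\mr^2}Ke^{2u}\,\ud x\geq 4\pi$.

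Finally, for the equality case $\int_{\mr^2}Ke^{2u}\,\ud x=4\pi$, that is $\alpha=2$, the chain above pins down $0\leq\frac{1}{2\pi}\int_{\mr^2}x\cdot\nabla K\,e^{2u}\,\ud x\leq\alpha(\alpha-2)=0$; as the integrand is a non-negative function times the positive weight $e^{2u}$, this forces $x\cdot\nabla K\equiv 0$. For smooth $K$ this makes $t\mapsto K(tx_0)$ constant on $t>0$ along every ray, so by continuity at the origin $K\equiv K(0)$, and $\int Ke^{2u}=4\pi>0$ makes this constant positive. Conversely, if $K$ is a positive constant the classification of normal solutions quoted in the introduction gives $\int_{\mr^2}Ke^{2u}\,\ud x=4\pi$, which closes the equivalence.
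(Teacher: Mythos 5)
Your proposal is correct and follows essentially the same route as the paper: reduce to a normal solution (the paper gets this by citing Theorem 2.2 of \cite{Li 23 Q-curvature}, while you sketch the standard harmonic-part Liouville argument behind that result), then apply Lemma \ref{lem: Pohozaev for non-sign changing} with $x\cdot\nabla K\geq 0$ to obtain $\alpha(\alpha-2)\geq 0$, exclude $\alpha\leq 0$ (you do this explicitly via Lemma \ref{lem: alpha  geq 1}, which the paper leaves implicit), and handle equality through $x\cdot\nabla K\equiv 0$ together with the Chen--Li classification. The only deviations are presentational—your direct derivation of normality and your positivity argument for the constant $K$ from $\int_{\mr^2}Ke^{2u}\,\ud x=4\pi>0$ instead of the paper's contradiction with $e^{2u}\in L^1(\mr^2)$—not differences of method.
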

\begin{proof}
	With help of Theorem 2.2 in \cite{Li 23 Q-curvature},   the solution to \eqref{Gaussian equation} must be  a normal solution. Making use  of Lemma \ref{lem: Pohozaev for non-sign changing}, one has $\alpha\geq2$ i.e.
	$$\int_{\mr^2}Ke^{2u}\ud x\geq 4\pi.$$
	
	When the equality  achieves,  one has $x\cdot \nabla K=0$ a.e. which shows that $K(x)$ is a non-negative constant since $K$ is non-negative near infinity. If $K(x)\equiv 0$, since  $u(x)$ is normal,  we have  $u\equiv C$ which  contradicts to $e^{2u}\in L^1(\mr^2)$.  Hence, $K(x)$ must be  a positive constant.
	
	On the other hand, if $K(x)$ is a positive constant, with help of the classification theorem in  \cite{CL91}, one has
	$$\int_{\mr^2}Ke^{2u}\ud x=4\pi.$$
	Finally, we finish our proof.
\end{proof}
\begin{remark}
	By utilizing this result, we are able to partially answer  the question raised by Gui and Moradifam in Remark 5.1 of their paper \cite{GM}.
\end{remark}
With additional  assumptions, we are able to obtain the Pohozaev  identity.
\begin{lemma}\label{lem: Pohozaev identity}(See Lemma 2.1 in \cite{Li23})
	Consider a normal solution $u(x)$ to \eqref{normal solution}. Supposing that  $$|Q(x)|e^{nu}\leq C|x|^{-n}$$ near infinity, then  there exists  a sequence $R_i\to\infty$ such that
	$$\lim_{i\to\infty}\frac{4}{n!|\mathbb{S}^n|}\int_{B_{R_i}(0)}x\cdot \nabla Qe^{nu}\ud x=\alpha(\alpha-2).$$
\end{lemma}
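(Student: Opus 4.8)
The plan is to re-run the computation in the proof of Lemma~\ref{lem: Pohozaev for non-sign changing} essentially verbatim. Starting from \eqref{equ:x,nabla u}, multiplying by $Q(x)e^{nu(x)}$, integrating over $B_R(0)$, writing $x=\tfrac12\big((x+y)+(x-y)\big)$ and using the antisymmetry of $\frac{\langle x+y,x-y\rangle}{|x-y|^2}=\frac{|x|^2-|y|^2}{|x-y|^2}$ under $x\leftrightarrow y$, I arrive at exactly the same relation as before, in which $\frac1n\int_{B_R(0)}x\cdot\nabla Q\,e^{nu}\,\ud x$ is expressed through $-\int_{B_R(0)}Qe^{nu}$, $\frac{\alpha}{2}\int_{B_R(0)}Qe^{nu}$, the boundary term $\frac{R}{n}\int_{\partial B_R(0)}Qe^{nu}\,\ud\sigma$, and the off-diagonal pieces $I_1(R)+I_2(R)+I_3(R)$. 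As in Lemma~\ref{lem: Pohozaev for non-sign changing}, $I_1,I_2\to0$ since they only use $Qe^{nu}\in L^1(\mr^n)$, and the boundary term can be made to vanish along a suitable $R_i\to\infty$, again by $Qe^{nu}\in L^1(\mr^n)$. The single new ingredient I need is to replace the inequality $I_3\le0$ (which in the previous lemma rested on $Q$ being of one sign near infinity and produced only $\limsup\le\alpha(\alpha-2)$) by the stronger statement $I_3\to0$; this is precisely what upgrades the inequality to the claimed identity.

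To prove $I_3(R)\to0$ I would exploit the decay hypothesis. Recall $I_3$ couples $x\in B_R(0)\backslash B_{R/2}(0)$ with $y\in B_{2R}(0)\backslash B_R(0)$, so on its domain $\tfrac{R}{2}\le|x|\le R\le|y|\le 2R$. Bounding the kernel by
\begin{equation*}
	\frac{\big||x|^2-|y|^2\big|}{|x-y|^2}=\frac{\big||x|-|y|\big|\,(|x|+|y|)}{|x-y|^2}\le\frac{3R}{|x-y|},
\end{equation*}
using $\big||x|-|y|\big|\le|x-y|$ and $|x|+|y|\le3R$, together with $|Q(y)|e^{nu(y)}\le C|y|^{-n}\le CR^{-n}$ on the outer annulus and the local bound $\int_{B_{3R}(0)}|z|^{-1}\,\ud z\le CR^{\,n-1}$, the inner integral is controlled uniformly in $x$ and in $R\gg1$:
\begin{equation*}
	\int_{B_{2R}(0)\backslash B_R(0)}\frac{\big||x|^2-|y|^2\big|}{|x-y|^2}|Q(y)|e^{nu(y)}\,\ud y\le CR^{-n}\cdot3R\int_{B_{3R}(0)}\frac{\ud z}{|z|}\le C.
\end{equation*}
Hence $|I_3(R)|\le C\int_{B_R(0)\backslash B_{R/2}(0)}|Q|e^{nu}\,\ud x$, and the right-hand side is an $L^1$-tail, so it tends to $0$ as $R\to\infty$.

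Feeding $I_1,I_2,I_3\to0$ and the vanishing boundary term into the identity and letting $R=R_i\to\infty$, the relation collapses to $\frac1n\int_{B_{R_i}(0)}x\cdot\nabla Q\,e^{nu}\,\ud x\to\big(\tfrac{\alpha}{2}-1\big)\int_{\mr^n}Qe^{nu}\,\ud x$. Substituting $\int_{\mr^n}Qe^{nu}\,\ud x=\tfrac{(n-1)!|\mathbb{S}^n|}{2}\alpha$ and multiplying by $\frac{4}{n!|\mathbb{S}^n|}$ gives exactly $\alpha(\alpha-2)$, as desired. The main obstacle is the second paragraph: once the sign condition on $Q$ is dropped there is no free inequality for $I_3$, and the kernel $\frac{|x|^2-|y|^2}{|x-y|^2}$ is singular along the diagonal; the whole argument turns on trading a single power of the pointwise decay $|Q|e^{nu}\le C|x|^{-n}$ against the scale-invariant, borderline local integrability of $|x-y|^{-1}$ to keep the inner integral uniformly bounded, after which $L^1$-integrability of $Qe^{nu}$ disposes of the outer integration.
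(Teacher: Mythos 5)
Your proposal is correct and follows essentially the same route as the paper's proof: the paper likewise reruns the computation of Lemma \ref{lem: Pohozaev for non-sign changing} verbatim and replaces the sign argument for $I_3$ by exactly your estimate, bounding the kernel by $\frac{|x+y|}{|x-y|}\leq \frac{CR}{|x-y|}$ (your reverse-triangle-inequality version is equivalent to the paper's Cauchy--Schwarz version), invoking $|Q|e^{nu}\leq C|x|^{-n}$ on the outer annulus and $\int_{B_{3R}(0)}|z|^{-1}\,\ud z\leq CR^{n-1}$ to get a uniform bound on the inner integral, and then killing $I_3$ by the $L^1$-tail of $Qe^{nu}$. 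The only quibble is cosmetic: in the last step the constant conversion should read $\frac{4}{n!|\mathbb{S}^n|}\int_{B_{R_i}(0)}x\cdot\nabla Q\,e^{nu}\,\ud x\to\alpha(\alpha-2)$ after multiplying your relation for $\frac{1}{n}\int x\cdot\nabla Q\,e^{nu}\,\ud x$ by $\frac{4n}{n!|\mathbb{S}^n|}$, which is what you evidently intend.
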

\begin{proof}
	
	The proof is essentially the same as  Lemma \ref{lem: Pohozaev for non-sign changing}, except for the treatment of the term $I_3(R)$.
	Firstly, a direct computation yields that
	$$|I_3|\leq \int_{ B_R(0)\backslash B_{R/2}(0)}|Q(x)|e^{nu(x)}\int_{B_{2R(0)}\backslash B_{R}(0)}\frac{|x+y|}{|x-y|}|Q(y)|e^{nu(y)}\ud y\ud x.$$
	For each $x\in B_R(0)\backslash B_{R/2}(0)$ and $R\gg1$, based on the assumption $|Q|e^{nu(x)}\leq C|x|^{-n}$ near infinity, a direct computation yields that
	\begin{align*}
		&\int_{B_{2R(0)}\backslash B_{R}(0)}\frac{| x+y|}{|x-y|}|Q(y)|e^{nu(y)}\ud y\\
		\leq &CR^{1-n}\int_{B_{2R(0)}\backslash B_{R}(0)}\frac{1}{|x-y|}\ud y\\
		\leq &CR^{1-n}\int_{B_{3R}(0)}\frac{1}{|y|}\ud y\leq C.
	\end{align*}
	Thus we obtain that
	$$|I_3|\leq C\int_{ B_R(0)\backslash B_{R/2}(0)}|Q|e^{nu}\ud x\to 0,\;\mathrm{as}\; R\to\infty.$$
	Continuing along the same line of reasoning as presented in Lemma \ref{lem: Pohozaev for non-sign changing}, we ultimately demonstrate the existence of a sequence $R_i\to\infty$ such that
	$$\lim_{i\to\infty}\frac{4}{n!|\mathbb{S}^n|}\int_{B_{R_i}(0)}x\cdot \nabla Q e^{nu}\ud x= \alpha(\alpha-2).$$

\end{proof}

\begin{lemma}\label{lem: PH for radial  symmetric}
	Consider a normal solution $u(x)$ to \eqref{normal solution} and $n\geq4$. Supposing  that $u(x)$ is radially symmetric, then there exists  a sequence $R_i\to\infty$ such that
	$$\lim_{i\to\infty}\frac{4}{n!|\mathbb{S}^n|}\int_{B_{R_i}(0)}x\cdot \nabla Qe^{nu}\ud x=\alpha(\alpha-2).$$
\end{lemma}
\begin{proof}
	Due to the same reason as before, we only need to deal with the  term $I_3(R)$ in Lemma \ref{lem: Pohozaev for non-sign changing}.
	With help of \eqref{equ:-Delta ^n/2u=Ke^nu}, it is easy to see that $Q(x)=e^{-nu(x)}(-\Delta)^{\frac{n}{2}}u(x)$ is also radially symmetric. infinity.
	For brevity, we set $\varphi(x):=Qe^{nu}$ which is radially symmetric. For $r=|x|\gg1$, we simply write $\varphi(x)$ as $\varphi(r)$ if needed.
	 Hence for $R\gg1 $, one has
	 \begin{equation}\label{varphi (r) to 0}
	 	\int_{R/2}^{R}r^{n-1}|\ms^{n-1}|\cdot|\varphi (r)|\ud r=\int_{B_{R}(0)\backslash B_{R/2}(0)}|Q|e^{nu}\ud x\to 0
	 \end{equation}
	as $R\to\infty$ due to $Qe^{nu}\in L^1(\mr^n).$
	For $n\geq 4$, by using the fact the $|x-y|^{2-n}$ is the Green's function for Laplacian on $\mr^n$ and mean value property for harmonic function, one may check that for $r=|x|<|y|$
	$$\frac{1}{|\partial B_r(0)|}\int_{\partial B_r(0)}\frac{1}{|x-y|^{n-2}}\ud\sigma(x)=\frac{1}{|y|^{n-2}}.$$
	Then for any $y\in B_{2R}(0)\backslash B_{R}(0)$, one has
	\begin{align*}
	|I_3(R)|=&\left|\int_{B_{2R}(0)\backslash B_R(0)}Q(y)e^{nu(y)}\int_{B_{R}(0)\backslash B_{R/2}(0)}\frac{|x|^2-|y|^2}{|x-y|^2}Q(x)e^{nu(x)}\ud x\ud y\right|\\
	\leq &\int_{B_{2R}(0)\backslash B_R(0)}|Q(y)e^{nu(y)}|	\int_{R/2}^{R}|\varphi(r)|(|y|^2-r^2)(|y|+r)^{n-4}\int_{\partial B_r(0)}\frac{1}{|x-y|^{n-2}}\ud\sigma(x)\ud r\ud y\\
	=&\int_{B_{2R}(0)\backslash B_R(0)}|Q(y)e^{nu(y)}|	\int_{R/2}^{R}|\varphi(r)|(|y|^2-r^2)(|y|+r)^{n-4}\frac{|\partial B_r(0)|}{|y|^{n-2}}\ud r\ud y\\
	\leq& C \int_{B_{2R}(0)\backslash B_R(0)}|Q(y)e^{nu(y)}|\ud y\cdot \int_{R/2}^{R}|\varphi(r)|r^{n-1}\ud r.
	\end{align*}
By using \eqref{varphi (r) to 0} and the fact $Qe^{nu}\in L^1(\mr^n)$, we show that
$$I_3(R)\to 0 ,\quad \mathrm{as}\; R\to 0.$$
Thus we finish our proof.
\end{proof}

\section{Polynomial cone condition on Q-curvature}\label{section: PCC condition}
\begin{definition}\label{def:s-cone}
	We say a function $\psi(x)\in L^\infty_{loc}(\mr^n)$ satisfying {\bf s-cone}  condition if
	there exists $s\in \mathbb{R}$,  $0<r_0<1$  such that
	$$|\psi(x)|\leq C(|x|+1)^s$$ and
	a sequence  $\{x_i\}\subset\mr^n$ with $|x_i|\geq 1$
	and $|x_i|\to\infty$ as $i\to\infty$ such that  for each $i$ and
	$x\in B_{r_0|x_i|}(x_i)$ there holds
	$$ \frac{|\psi(x)|}{|x|^s}\geq c_1>0$$
	where $c_1$ is a constant independent  of $i$.
\end{definition}

The definition mentioned here is derived from  \cite{Li23} where the first author  focused on scenarios where $Q(x)$ is a polynomial, which is a common case that meets the {\bf s-cone} condition.

\begin{lemma}\label{lem:s-cone}
	Each non-constant polynomial $P(x)$ on $\mr^n$ satisfies {\bf s-cone} condition with $s=\deg P$.
\end{lemma}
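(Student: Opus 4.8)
The plan is to isolate the leading homogeneous part of $P$ and exploit its homogeneity inside a thin cone aligned with a carefully chosen direction. Write $s=\deg P$ and decompose $P=P_s+\tilde P$, where $P_s$ is the nonzero homogeneous polynomial of degree $s$ collecting the top-degree monomials and $\tilde P$ is a polynomial with $\deg\tilde P\leq s-1$. The required upper bound $|P(x)|\leq C(|x|+1)^s$ is immediate because $P$ has degree $s$, so the only genuine content is to produce the sequence $\{x_i\}$ together with a radius ratio $r_0$ and the uniform lower bound $|P(x)|/|x|^s\geq c_1>0$ on each ball $B_{r_0|x_i|}(x_i)$.

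First I would fix a good direction. Since $P_s\not\equiv 0$ is continuous and homogeneous, there is some $\omega\in\mathbb{S}^{n-1}$ with $P_s(\omega)\neq 0$. By continuity of $P_s$ on the sphere there exist $\delta>0$ and $c_0>0$ with $|P_s(\theta)|\geq c_0$ whenever $\theta\in\mathbb{S}^{n-1}$ and $|\theta-\omega|<\delta$; homogeneity then upgrades this to $|P_s(x)|\geq c_0|x|^s$ for every $x$ in the open cone $\mathcal{C}_\delta:=\bigl\{x\neq 0:\ \bigl|\tfrac{x}{|x|}-\omega\bigr|<\delta\bigr\}$.

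Next I would relate the Euclidean balls centred on the ray $\{R\omega\}_{R>0}$ to this cone. Because $\mathcal{C}_\delta$ is scale invariant, $x\in B_{r_0 R}(R\omega)$ is equivalent to $x/R\in B_{r_0}(\omega)$, and an elementary estimate shows that for $y\in B_{r_0}(\omega)$ one has $\bigl|\tfrac{y}{|y|}-\omega\bigr|\leq \tfrac{2r_0}{1-r_0}$. Hence, fixing $r_0\in(0,1)$ small enough that $\tfrac{2r_0}{1-r_0}<\delta$, the inclusion $B_{r_0 R}(R\omega)\subset\mathcal{C}_\delta$ holds uniformly in $R>0$; on such a ball one also has $|x|\geq(1-r_0)R$. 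Taking $x_i=R_i\omega$ for any $R_i\to\infty$ with $R_i\geq 1$ keeps the centres on the required sphere while forcing $|x|\to\infty$ throughout $B_{r_0|x_i|}(x_i)$.

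Finally I would absorb the lower-order remainder. On $B_{r_0 R}(R\omega)\subset\mathcal{C}_\delta$ and for $|x|\geq 1$ we have $|P(x)|\geq|P_s(x)|-|\tilde P(x)|\geq c_0|x|^s-C'|x|^{s-1}$, so choosing $R_i$ large enough that $(1-r_0)R_i\geq 2C'/c_0$ guarantees $c_0|x|-C'\geq \tfrac{c_0}{2}|x|$ throughout the ball, whence $|P(x)|/|x|^s\geq c_0/2=:c_1$. This verifies every clause of the {\bf s-cone} condition with $s=\deg P$. I expect the main obstacle to be the geometric step relating the fixed-proportion ball $B_{r_0|x_i|}(x_i)$ to the scale-invariant cone $\mathcal{C}_\delta$ with a single choice of $r_0$ valid for all radii; once that uniformity is secured, both the homogeneity lower bound and the absorption of the lower-order terms are routine.
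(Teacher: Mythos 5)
Your proof is correct and follows essentially the same strategy as the paper's: decompose $P$ into its leading homogeneous part plus a lower-degree remainder, use continuity on $\mathbb{S}^{n-1}$ to get a uniform lower bound for the homogeneous part on a cone, verify that balls of radius proportional to $|x_i|$ centred along a fixed ray stay inside that cone, and absorb the lower-order terms for $|x|$ large. The only difference is presentational: the paper phrases the cone via polar coordinates and a geodesic ball on the sphere, while you work with the scale-invariant cone directly and make the inclusion $B_{r_0R}(R\omega)\subset\mathcal{C}_\delta$ explicit through the estimate $\bigl|\tfrac{y}{|y|}-\omega\bigr|\leq\tfrac{2r_0}{1-r_0}$.
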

\begin{proof}
	We can decompose the non-constant polynomial $P(x)$ as
	$$P(x)=H_{s}(x)+P_{s-1}(x)$$
	where $H_{s}(x)$ is a homogeneous function with degree equal to $s\geq1$ and $P_{s-1}(x)$ is a polynomial with degree at most $s-1$.
	Immediately, one has
	$$|P(x)|\leq C(|x|+1)^s.$$
	We choose  the polar coordinate such that  $x=\xi(r,\theta)$ with $r\geq 0$ and $\theta\in\mathbb{S}^{n-1}$. Then one has
	$$H_s(x)=r^{s}\varphi_s(\theta)$$ where $\varphi_s(\theta)$ is a non-zero smooth function defined on $\mathbb{S}^{n-1}$. There exist $c_1>0$ and a geodesic ball $\hat B_{s_1}(\theta_0)\subset\mathbb{S}^{n-1}$ such that
	$$|\varphi_s(\theta)|\geq 2c_1>0$$
	for any $\theta\in \hat B_{s_1}(\theta_0)$.  Then
	$$|P(x)|\geq2c_1|x|^s-c_2|x|^{s-1}$$
	where $c_2>0$ is a constant depending only on the coefficients of $P_{s-1}(x)$. We choose $R_1=\max\{1,\frac{c_2}{c_1}\}$ and then one has
	$$|P(x)|\geq c_1|x|^s$$
	for $(r,\theta)\in [R_1,+\infty)\times \hat B_{s_1}(\theta_0)$.
	It is not hard to see that  there exist $x_0\in\mr^n$ with $|x_0|=1$ and $0<s_0<1$ such that
	$$ \xi^{-1}(B_{s_0}(x_0))\subset [0,+\infty)\times \hat B_{s_1}(\theta_0).$$
	Meanwhile, one may check that for any $t>0$
	$$B_{s_0t}(tx_0)=tB_{s_0}(x_0)$$
	where $tB_{s_0}(x_0):=\{tx\in \mr^n|x\in B_{s_0}(x_0)\}.$
	For any $x\in B_{s_0t}(tx_0)$, there holds
	$|x|\geq (1-s_0)t.$
	Then there exists $t_1>0$ such that $t\geq t_1$
	$$\xi^{-1}(B_{s_0t}(tx_0))\subset [R_1,+\infty)\times \hat B_{s_1}(\theta_0).$$
	In particular, for any $t\geq t_1$ and $x\in B_{ts_0}(tx_0)$ one has
	$$|P(x)|\geq c_1|x|^s.$$
	Thus $P(x)$ satisfies {\bf s-cone} condition with $s=\deg P$.
	
\end{proof}

\begin{lemma}\label{lem:alpha geq tau Q}
	Consider the   normal solution $u(x)$ to  \eqref{normal solution} with  $Q(x)$ satisfying {\bf s-cone} condition. Then
	$$\alpha\geq 1+\frac{s}{n}.$$
\end{lemma}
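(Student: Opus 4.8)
The plan is to run the argument behind Lemma \ref{lem: alpha  geq 1}, but instead of averaging over a single escaping ball I would localize on the sequence of balls $B_{r_0|x_i|}(x_i)$ furnished by the {\bf s-cone} condition, where $Q$ is quantitatively large, and then play the resulting lower bound for the localized mass of $Qe^{nu}$ against the integrability $Qe^{nu}\in L^1(\mr^n)$.

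First I would fix the constant $0<r_0<1$ and the sequence $\{x_i\}$ from Definition \ref{def:s-cone}, so that $|x_i|\to\infty$ and $|Q(y)|\geq c_1|y|^s$ for every $y\in B_{r_0|x_i|}(x_i)$. Since $0<r_0<1$ is fixed and $|x_i|\to\infty$, Lemma \ref{lem:B_r_0|x|} applies with $r_1=r_0$ at the points $x=x_i$, giving $\fint_{B_{r_0|x_i|}(x_i)}u\,\ud y=(-\alpha+o(1))\log|x_i|$. Jensen's inequality then yields the volume lower bound
\[
\int_{B_{r_0|x_i|}(x_i)}e^{nu}\,\ud y\geq |B_{r_0|x_i|}(x_i)|\exp\!\left(n\fint_{B_{r_0|x_i|}(x_i)}u\,\ud y\right)=C|x_i|^{n-n\alpha+o(1)}.
\]

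Next I would insert the {\bf s-cone} lower bound on $|Q|$. For $y\in B_{r_0|x_i|}(x_i)$ one has $(1-r_0)|x_i|\leq|y|\leq(1+r_0)|x_i|$, so $|y|^s\geq c\,|x_i|^s$ for a constant $c=c(r_0,s)$ (using the factor $(1-r_0)^s$ when $s\geq0$ and $(1+r_0)^s$ when $s<0$). Combining this with $|Q(y)|\geq c_1|y|^s$ and the previous display gives
\[
\int_{B_{r_0|x_i|}(x_i)}|Q|e^{nu}\,\ud y\geq c_1 c\,|x_i|^{s}\int_{B_{r_0|x_i|}(x_i)}e^{nu}\,\ud y\geq C|x_i|^{n+s-n\alpha+o(1)}.
\]
Because $r_0<1$, each ball $B_{r_0|x_i|}(x_i)$ lies in the exterior region $\mr^n\setminus B_{(1-r_0)|x_i|}(0)$, whose inner radius tends to $\infty$; as $Qe^{nu}\in L^1(\mr^n)$, the tail over this region, and hence the left-hand side above, tends to $0$ as $i\to\infty$. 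Thus $|x_i|^{n+s-n\alpha+o(1)}\to0$, which is impossible unless $n+s-n\alpha\leq0$, i.e. $\alpha\geq 1+\frac{s}{n}$.

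The step requiring the most care is the bookkeeping around the sign of $s$ in the uniform estimate $|y|^s\geq c\,|x_i|^s$ over the ball, together with the verification that the localizing balls genuinely escape to infinity so that $L^1$-integrability forces the localized integral to vanish; this last point is precisely where the hypothesis $r_0<1$ is essential, since it keeps the balls bounded away from the origin at a comparable scale.
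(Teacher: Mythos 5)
Your proposal is correct and is essentially identical to the paper's own proof: both localize on the \textbf{s-cone} balls $B_{r_0|x_i|}(x_i)$, apply Lemma \ref{lem:B_r_0|x|} together with Jensen's inequality to get $\int_{B_{r_0|x_i|}(x_i)}e^{nu}\,\ud y\geq C|x_i|^{n-n\alpha+o(1)}$, insert the lower bound $|Q|\geq c_1|y|^s\geq c|x_i|^s$ on the ball, and conclude from $Qe^{nu}\in L^1(\mr^n)$ that $n+s-n\alpha\leq 0$. Your extra bookkeeping on the sign of $s$ and on the escaping balls only makes explicit what the paper leaves implicit.
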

\begin{proof}
	Due to $Q(x)$ satisfying s-cone condition, there exits a sequence $\{x_i\}$ and $0<s_0<1$ such that in each ball $x\in B_{s_0|x_i|}(x_i)$
	$$ |Q(x)|\geq C|x|^s$$
	With help of Lemma \ref{lem:B_r_0|x|} and Jensen's inequality, one has
	\begin{align*}
		\int_{B_{s_0|x_i|}(x_i)}|Q|e^{nu}\ud x\geq &C\int_{B_{s_0|x_i|}(x_i)}|x|^se^{nu}\ud x\\
		\geq &C|x_i|^s\int_{B_{s_0|x_i|}(x_i)}e^{nu}\ud x\\
		\geq &C|x_i|^{s}|B_{s_0|x_i|}(x_i)|\exp\left(\fint_{B_{s_0|x_i|}(x_i)}nu\ud x\right)\\
		\geq &C|x_i|^{s+n-n\alpha+o(1)}.
	\end{align*}
	Due to $Qe^{nu}\in L^1(\mr^n)$, letting $i\to\infty$, we have
	$$\alpha\geq 1+\frac{s}{n}.$$
	
\end{proof}

\begin{lemma}\label{lem: Qe^nu leq  C|x|^-n}
	Consider the normal solution $u(x)$ to  \eqref{normal solution} with  $Q(x)$ satisfying {\bf s-cone} condition. Supposing that  there exists $ s_1<s$ such that  $Q^+\leq C |x|^{s_1}$ or $Q^-\leq C|x|^{s_1}$ near infinity, then   there holds
	$$|Q(x)|e^{nu}\leq C|x|^{-n}$$
	near infinity.
\end{lemma}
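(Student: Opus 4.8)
The plan is to reduce the pointwise decay of $|Q|e^{nu}$ to a one‑sided pointwise bound on $u$, and then to extract that bound from the normal representation \eqref{normal solution}. Write $c_n=(n-1)!|\mathbb{S}^n|$. Since the {\bf s-cone} condition gives both $|Q(x)|\le C(|x|+1)^s$ and, by Lemma \ref{lem:alpha geq tau Q}, the lower bound $\alpha\ge 1+\tfrac{s}{n}$, it suffices to prove
\[
	u(x)\le -\alpha\log|x|+C\qquad(|x|\gg1).
\]
Indeed this yields $e^{nu}\le C|x|^{-n\alpha}$, and hence $|Q|e^{nu}\le C|x|^{s-n\alpha}\le C|x|^{s-(n+s)}=C|x|^{-n}$, which is the claim. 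So the whole matter is the sharp pointwise upper bound for $u$, with an additive constant rather than the $o(\log|x|)$ error of Lemma \ref{lem: L(f)}.

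The device I would use is the splitting $u=u_+-u_-+C$, where $u_\pm(x)=\tfrac{2}{c_n}\int_{\mr^n}\log\tfrac{|y|}{|x-y|}Q^\pm(y)e^{nu(y)}\,\ud y$ are the log‑potentials of the nonnegative densities $Q^\pm e^{nu}$, and $\alpha=\alpha^+-\alpha^-$ with $\alpha^\pm=\tfrac{2}{c_n}\int Q^\pm e^{nu}$. Because $Q^-e^{nu}\ge0$, the argument proving the lower‑bound half of Lemma \ref{lem:Q^+ and Q^-}, applied to the single nonnegative density $Q^-e^{nu}$ (split $\log\tfrac{|y|}{|x-y|}=\log|y|-\log|x-y|$, use $|x-y|\le|x|+|y|$, and note that the region $|y|<1$ is bounded by smoothness), gives $u_-(x)\ge-\alpha^-\log|x|-C$ \emph{unconditionally}. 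Thus everything reduces to the upper bound $u_+(x)\le-\alpha^+\log|x|+C$.

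To treat $u_+$ I would write $u_+(x)+\alpha^+\log|x|=\tfrac{2}{c_n}\int_{\mr^n}\log\tfrac{|x||y|}{|x-y|}Q^+e^{nu}\,\ud y$ and bound the kernel on three regions. The near‑diagonal part $|x-y|<1$ is harmless in every case: using $Q^+\le C|x|^s$, Hölder's inequality (the weight $\log\tfrac1{|x-y|}$ lies in every $L^p(B_1(x))$), and the local estimate \eqref{e^mu for B_r} with $m=np'$, one obtains a contribution $\le C|x|^{s-n\alpha+o(1)}\to0$. The inner region $|y|\le|x|/2$ contributes $\tfrac{2}{c_n}\int_{|y|\le|x|/2}(\log|y|+O(1))Q^+e^{nu}$, a bounded quantity provided $\int|y|^{s}\log|y|\,e^{nu}<\infty$. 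The decisive term is the far field $|y|\gtrsim|x|$, which produces the factor $\log|x|$ multiplying the tail mass, i.e. $\log|x|\cdot\int_{|y|\ge|x|/2}Q^+e^{nu}$.

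This far‑field term is where the hypothesis enters, and the two alternatives behave very differently. In the first alternative $Q^+\le C|x|^{s_1}$ with $s_1<s$, Lemma \ref{lem: e^mu leq R-malpha} summed over dyadic annuli (using $\alpha\ge1+\tfrac{s}{n}$) gives $\int_{|y|\ge R}Q^+e^{nu}\le C\int_{|y|\ge R}|y|^{s_1}e^{nu}\le CR^{\,s_1-s+o(1)}$, a genuine negative power that defeats $\log R$; the same gain makes $\int|y|^{s_1}\log|y|\,e^{nu}$ converge, so all three regions are $O(1)$ and $u_+\le-\alpha^+\log|x|+C$ follows, giving $u\le-\alpha\log|x|+C$. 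In the second alternative one controls only $Q^-\le C|x|^{s_1}$, and the {\bf s-cone} bound $|Q|\ge c|x|^s$ then forces $Q^+\sim|x|^s$ on the cone balls, so the clean estimates instead produce the \emph{symmetric} statement $u_-\le-\alpha^-\log|x|+C$, i.e. a \emph{lower} bound $u\ge-\alpha\log|x|-C$ — the wrong direction. The upper bound we actually need is then governed by the uncontrolled large part $Q^+$, for which only $Q^+\le C|x|^s$ is available, making the tail estimate $\int_{|y|\ge R}Q^+e^{nu}\le CR^{\,s+n-n\alpha+o(1)}$ \emph{critical} exactly when $\alpha=1+\tfrac{s}{n}$. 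I expect this borderline tail control to be the main obstacle: when $\alpha>1+\tfrac{s}{n}$ the surplus $(\alpha-1-\tfrac{s}{n})\log|x|$ absorbs the $o(\log|x|)$ error of Lemma \ref{lem: L(f)} and the argument closes, so the entire difficulty concentrates on the equality $\alpha=1+\tfrac{s}{n}$ under the second alternative. The way I would try to break it is to use information not exploited above, namely that a polynomial $Q$ is eventually sign‑definite, so near infinity $Q^+$ is either $Q$ itself or vanishes, and its tail mass can be pinned to $\alpha$ through a Pohozaev‑type balance (Lemma \ref{lem: Pohozaev for non-sign changing}); this should upgrade the critical tail estimate to a strictly subcritical one and remove the borderline loss.
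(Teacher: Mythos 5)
Your overall skeleton---reducing the claim to the pointwise bound $u(x)\le-\alpha\log|x|+C$, splitting $u$ into the log-potentials of $Q^{\pm}e^{nu}$, and the three-region kernel estimates for the first alternative $Q^{+}\le C|x|^{s_1}$---coincides with the paper's proof, and that half of your argument is sound. The genuine gap is the second alternative $Q^{-}\le C|x|^{s_1}$: you correctly isolate the difficulty as the borderline case $\alpha=1+\frac{s}{n}$, but your proposed resolution does not work within the lemma's hypotheses. You invoke that ``a polynomial $Q$ is eventually sign-definite,'' yet the lemma assumes only the {\bf s-cone} condition (such functions need not be polynomials at all), and even polynomials on $\mr^n$ with $n\ge2$ need not have a fixed sign near infinity (e.g.\ $Q(x)=x_1$), so Lemma \ref{lem: Pohozaev for non-sign changing} need not be applicable; moreover, the claim that a Pohozaev balance ``pins the tail mass'' and upgrades the critical estimate is left as a heuristic rather than an argument.

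What you are missing is that the second alternative already hands you the key, through the very lower bound you dismiss as being ``the wrong direction.'' As you note, $Q^{-}\le C|x|^{s_1}$ yields the sharp bound $u(x)\ge-\alpha\log|x|-C$ with a \emph{constant} error. Insert this into the cone balls of the {\bf s-cone} condition: along the sequence $x_i\to\infty$,
\begin{align*}
\int_{B_{r_0|x_i|}(x_i)}|Q|e^{nu}\,\ud x\ \ge\ c\,|x_i|^{s}\cdot|x_i|^{-n\alpha}\cdot|x_i|^{n}\ =\ c\,|x_i|^{\,n+s-n\alpha},
\end{align*}
with no $o(1)$ loss in the exponent (contrast Lemma \ref{lem:alpha geq tau Q}, whose $o(1)$ error only gives the non-strict $\alpha\ge1+\frac{s}{n}$). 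Since $Qe^{nu}\in L^1(\mr^n)$ and these balls lie in $\{|x|\ge(1-r_0)|x_i|\}$, the left-hand side must tend to $0$ as $i\to\infty$, which forces $n+s-n\alpha<0$, i.e.\ the \emph{strict} inequality $\alpha>1+\frac{s}{n}$. By your own observation, strictness is exactly what closes the upper-bound argument, now using only the crude bound $Q^{+}\le|Q|\le C|x|^{s}$: every tail estimate gains a fixed negative power, one obtains $|u+\alpha\log|x||\le C$, and then $|Q|e^{nu}\le C|x|^{s-n\alpha}=o(|x|^{-n})$. This is precisely how the paper handles the second case; no polynomiality, sign condition at infinity, or Pohozaev identity is needed.
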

\begin{proof}
	By a direct computation, we have
	\begin{align*}
		&\frac{(n-1)!|\mathbb{S}^n|}{2}(u(x)+\alpha\log|x|)\\
		=&\int_{\mr^n}\log\frac{|x|\cdot(|y|+1)}{|x-y|}Qe^{nu}\ud y+\int_{\mr^n}\log\frac{|y|}{|y|+1}Qe^{nu}\ud y+C\\
		=&\int_{\mr^n}\log\frac{|x|\cdot(|y|+1)}{|x-y|}Q^+e^{nu}\ud y-\int_{\mr^n}\log\frac{|x|\cdot(|y|+1)}{|x-y|}Q^-e^{nu}\ud y+C\\
		=&:I_1-II_2+C
	\end{align*}
	For $|x|\geq 1 $, it is easy to check that
	$$\frac{|x|\cdot(|y|+1)}{|x-y|}\geq 1$$
	which shows that
	$$\log\frac{|x|\cdot(|y|+1)}{|x-y|}\geq 0$$
	and then  $I_1\geq 0$, $II_2\geq0$.
	
	From now on, we suppose that $|x|\gg1$.
	
	If $Q^+(x)\leq C|x|^{s_1}$ near infinity, we split $I_1$ as follows
	$$I_1=\int_{|x-y|\leq \frac{|x|}{2}}\log\frac{|x|\cdot(|y|+1)}{|x-y|}Q^+e^{nu}\ud y+\int_{|x-y|\geq \frac{|x|}{2}}\log\frac{|x|\cdot(|y|+1)}{|x-y|}Q^+e^{nu}\ud y=:I_{1,1}+I_{1,2}$$
	Using the estimate \eqref{e^mu for B_r} and Lemma \ref{lem: e^mu leq R-malpha}, a direct computation  and H\"older's inequality yield that
	\begin{align*}
		I_{1,1}=& \int_{|x-y|\leq  \frac{|x|}{2}}\log(|x|(|y|+1))Q^+e^{nu}\ud y+ \int_{|x-y|\leq  \frac{|x|}{2}}\log\frac{1}{|x-y|}Q^+e^{nu}\ud y\\
		\leq &\int_{|x-y|\leq  \frac{|x|}{2}}\log(|x|(|y|+1))Q^+e^{nu}\ud y+ \int_{|x-y|\leq  1}\log\frac{1}{|x-y|}Q^+e^{nu}\ud y\\
		\leq &C\log(4|x|^2)|x|^{s_1}\int_{|x-y|\leq \frac{|x|}{2}}e^{nu}\ud y+C|x|^{s_1}\int_{|x-y|\leq 1}\log\frac{1}{|x-y|}e^{nu}\ud y\\
		\leq &C\log(4|x|^2)|x|^{s_1}\int_{|x-y|\leq \frac{|x|}{2}}e^{nu}\ud y+C|x|^{s_1}(\int_{|x-y|\leq 1}(\log\frac{1}{|x-y|})^2\ud y)^{\frac{1}{2}}(\int_{|x-y|\leq1}e^{2nu}\ud y)^{\frac{1}{2}}\\
		\leq& C\log(4|x|^2)|x|^{s_1}\int_{\frac{|x|}{2}\leq |y|\leq \frac{3|x|}{2}}e^{nu}\ud y+C|x|^{s_1}|x|^{-n\alpha+o(1)}\\
		\leq &C\log(4|x|^2)|x|^{s_1}|x|^{n-n\alpha+o(1)}+C|x|^{s_1}|x|^{-n\alpha+o(1)}
	\end{align*}
	With help of  Lemma   \ref{lem:alpha geq tau Q} and $s>s_1$,  there holds
	$$I_{1,1}\leq C\log(4|x|^2)|x|^{s_1-s+o(1)}+C|x|^{-n+s_1-s+o(1)}$$
	which shows that
	\begin{equation}\label{I_1,1 leq C}
		I_{1,1}\in L^\infty(\mr^n).
	\end{equation}
	With help of Lemma \ref{lem: e^nu leq R-nalpha} and Lemma \ref{lem:alpha geq tau Q}, choosing $\epsilon=\frac{s-s_1}{2}$,
	there exist  an integer $R_1>0$ such that for any $i\geq R_1+1$
	$$\int_{B_{i}(0)\backslash B_{i-1}(0)}e^{nu}\ud x\leq i^{n-1-n\alpha+\epsilon}\leq i^{-1-s+\epsilon}.$$
	As for the second term $I_{1,2}$,  for small $\epsilon>0$,
	Then there holds
	\begin{align*}
		I_{1,2}\leq& \int_{|x-y|\geq \frac{|x|}{2}}\log2(|y|+1)Q^+e^{nu}\ud y\\
		\leq &\int_{\mr^n}\log(2|y|+2)Q^+e^{nu}\ud y\\
		\leq &C +C\lim_{k\to\infty}\sum_{i=R_1+1}^k\log(2i+2)i^{s_1}\int_{B_i(0)\backslash B_{i-1}(0)}e^{nu}\ud y\\
		\leq&C+C\lim_{k\to\infty}\sum_{i=R_1+1}^k\log(2i+2)i^{s_1}i^{-1-s+\epsilon}<+\infty.
	\end{align*}
	Combing with \eqref{I_1,1 leq C}, one has
	$$I_1\in L^\infty(\mr^n).$$
	Then there holds
	\begin{equation}\label{u leq -alpha log |x|+C}
		u(x)\leq -\alpha\log|x|+C.
	\end{equation}
	Since $Q(x)$ satisfies  {\bf s-cone} condition, Lemma \ref{lem:alpha geq tau Q} and \eqref{u leq -alpha log |x|+C} imply that
	$$|Q(x)|e^{nu}\leq C|x|^s\cdot|x|^{-n\alpha}\leq C|x|^{-n}.$$
	
	On the other hand,  if $Q^-\leq C|x|^{s_1}$ near infinity, using similar argument, one has $II_2\leq C$ which yields that
	$$u(x)\geq -\alpha\log|x|-C.$$
	Due to  $Q(x)$ satisfying {\bf s-cone} condition, there holds
	\begin{align*}
		\int_{B_{s_0|x_i|}(x_i)}|Q|e^{nu}\ud x\geq &C\int_{B_{s_0|x_i|}(x_i)}|x|^se^{nu}\ud x\\
		\geq &C|x_i|^s\int_{B_{s_0|x_i|}(x_i)}e^{nu}\ud x\\
		\geq &C|x_i|^{s+n-n\alpha}
	\end{align*}
	which yields that
	$$\alpha>1+\frac{s}{n}.$$
	Since $|Q(x)|\leq C|x|^s$ near infinity and $\alpha>1+\frac{s}{n}$,
	similar argument yields that  $I_1\in L^\infty(\mr^n)$. Finally, we obtain  that
	$$|u+\alpha\log|x||\leq C.$$
	Thus  one has
	$$|Q(x)|e^{nu}|x|^n\leq C |x|^{s+n-n\alpha}=o(1).$$
	
	Finally, we finish our proof.
\end{proof}
As an application, we generalize Corollary 2.4 in \cite{Li23}.
\begin{theorem}
	Suppose that a smooth function $Q(x)$ satisfying {\bf s-cone} condition with $s\geq n$ as well as  $$x\cdot \nabla Q\leq 0.$$ There is no normal solution $u(x)$ to \eqref{normal solution} on $\mr^n$ with $Qe^{nu}\in L^1(\mr^n)$ where  even integer $n\geq2$.
\end{theorem}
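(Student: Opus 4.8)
\noindent The plan is to argue by contradiction: assume a normal solution $u(x)$ exists and derive a contradiction from the Pohozaev identity combined with the lower bound on $\alpha$. The conceptual engine is the observation that the one-sided bound $x\cdot\nabla Q\le0$ upgrades to a genuine \emph{upper} bound on $Q$ near infinity. Indeed, in polar coordinates $x=r\theta$ one has $\partial_r Q(r\theta)=\tfrac1r\,x\cdot\nabla Q\le0$, so $Q(r\theta)$ is non-increasing in $r$; hence for $r\ge1$ we get $Q(r\theta)\le Q(\theta)\le\max_{\mathbb{S}^{n-1}}Q=:M$. Thus $Q$ is bounded above on $\{|x|\ge1\}$, i.e. $Q^+\le C=C|x|^0$ near infinity. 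Since the \textbf{s-cone} condition has $s\ge n\ge2>0$, we may take $s_1=0<s$, so Lemma~\ref{lem: Qe^nu leq  C|x|^-n} applies and yields the decay $|Q|e^{nu}\le C|x|^{-n}$ near infinity.

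\noindent With this decay in hand, Lemma~\ref{lem: Pohozaev identity} provides a sequence $R_i\to\infty$ with
$$\lim_{i\to\infty}\frac{4}{n!|\mathbb{S}^n|}\int_{B_{R_i}(0)}x\cdot\nabla Q\,e^{nu}\,\ud x=\alpha(\alpha-2).$$
Since $x\cdot\nabla Q\le0$ and $e^{nu}>0$, the left side is $\le0$, forcing $\alpha(\alpha-2)\le0$, hence $\alpha\le2$. On the other hand, the \textbf{s-cone} condition together with Lemma~\ref{lem:alpha geq tau Q} gives $\alpha\ge1+\frac{s}{n}\ge2$. If $s>n$ this is already a contradiction, because then $\alpha>2$ yields $\alpha(\alpha-2)>0$.

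\noindent It remains to treat the borderline case $s=n$, where the two inequalities force $\alpha=2$ and the displayed limit equals $0$. Write $g:=x\cdot\nabla Q\,e^{nu}\le0$. The function $R\mapsto\int_{B_R(0)}g\,\ud x$ is finite for every $R$ and non-increasing in $R$; being monotone, its limit as $R\to\infty$ exists and, matching the subsequential limit along $R_i$, equals $0$. A non-increasing, non-positive quantity converging to $0$ must be identically $0$, so $\int_{B_R(0)}g\,\ud x=0$ for every $R$, whence $g\equiv0$ and therefore $x\cdot\nabla Q\equiv0$ (as $e^{nu}>0$ is continuous). But then $Q(r\theta)=Q(\theta)$ is bounded on $\{|x|\ge1\}$, contradicting the \textbf{s-cone} lower bound $|Q(x)|\ge c_1|x|^s\to\infty$ on the balls $B_{r_0|x_i|}(x_i)$ with $|x_i|\to\infty$ (recall $s>0$). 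This contradiction shows no such normal solution can exist.

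\noindent The crucial step is the first one: recognizing that $x\cdot\nabla Q\le0$ forces $Q\le M$ near infinity, which is precisely what lets us invoke Lemma~\ref{lem: Qe^nu leq  C|x|^-n} with $s_1=0$; without this upper bound one cannot guarantee the decay $|Q|e^{nu}\le C|x|^{-n}$ needed for the Pohozaev identity (note $Q$ may well change sign near infinity, so Lemma~\ref{lem: Pohozaev for non-sign changing} is not directly available). The one genuinely delicate point afterward is the borderline case $s=n$: here the integrand $g$ need not be globally integrable, so one must extract the pointwise identity $x\cdot\nabla Q\equiv0$ from the vanishing of a \emph{monotone} family of integrals rather than from a dominated-convergence or $L^1$ argument.
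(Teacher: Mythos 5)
Your proposal is correct and follows essentially the same route as the paper's proof: deduce from $x\cdot\nabla Q\le 0$ that $Q$ is bounded above (so the decay lemma $|Q|e^{nu}\le C|x|^{-n}$ applies with $s_1=0<s$), invoke the Pohozaev identity to get $\alpha(\alpha-2)\le 0$, and contradict the {\bf s-cone} bound $\alpha\ge 1+\frac{s}{n}\ge 2$. The only difference is cosmetic: at the endpoint the paper uses the monotonicity of $R\mapsto\int_{B_R(0)}x\cdot\nabla Q\,e^{nu}\,\ud x$ to identify the subsequential limit with the full integral, which is strictly negative since $Q$ is non-constant (giving $0<\alpha<2$ at once), whereas you isolate the case $s=n$, $\alpha=2$ and extract $x\cdot\nabla Q\equiv 0$ from the vanishing monotone limit before contradicting the {\bf s-cone} growth --- both treatments are valid.
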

\begin{proof}
	We argue by contradiction. Assume that such solution $u(x)$ exists.
	Since $x\cdot \nabla Q\leq 0$, we have $Q\leq C.$  Based on $Q$ satisfying {\bf s-cone} condition with $s\geq n$, with help of Lemma \ref{lem: Qe^nu leq  C|x|^-n} and Lemma \ref{lem: Pohozaev identity}, there exists a sequence $R_i\to\infty$ such that
	$$\lim_{i\to\infty}\frac{4}{n!|\mathbb{S}^n|}\int_{B_{R_i}(0)}x\cdot \nabla Qe^{nu}\ud x=\alpha(\alpha-2).$$
	Due to $x\cdot \nabla Q\leq 0$, we further have
	$$\frac{4}{n!|\mathbb{S}^n|}\int_{\mr^n}x\cdot \nabla Qe^{nu}\ud x=\alpha(\alpha-2)$$
	which yields that
	\begin{equation}\label{alpha bound}
		0<\alpha<2
	\end{equation}
	since $Q$ is obviously not a constant.
	However, Lemma \ref{lem:alpha geq tau Q} yields that
	$$\alpha\geq 1+\frac{s}{n}\geq 2$$
	which contradicts to \eqref{alpha bound}.
	
\end{proof}

In particular, suppose that $Q(x)$ is a non-constant  polynomial splitting as
$$Q(x)=H_{m}(x)+P_{m-1}(x)$$
where $H_{m}$ is a homogeneous function and $P_{m-1}(x)$ is a polynomial of degree at most $m-1$.
If either  $H_m(x)\geq 0$ or $H_m(x)\leq 0$, then we have
either $Q^-\leq C|x|^{m-1}$ or $Q^+\leq C|x|^{m-1}$ near infinity respectively.

\section{Bol's inequality for polynomial Q-curvature}\label{section: Bol's inequality for polynomial}
For readers' convenience,  we establish the modified Ding's lemma (See Lemma 1.1 in \cite{CL91}). It has  also been established in Proposition 8.5 of \cite{GL} or Lemma 2.6 of  \cite{LX23}.
\begin{lemma}\label{lem: modified Ding's lemma}
	Consider a smooth solution  $u(x)$ to the following equation
	$$-\Delta u=fe^{2u}\quad\mathrm{on}\;\; \mathbb{R}^2$$
	with smooth function $f\leq 1$, then there holds
	$$\int_{\mathbb{R}^2}e^{2u}\ud x\geq 4\pi$$
	when  the equality achieves, one has $f\equiv1$.
\end{lemma}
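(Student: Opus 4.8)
The plan is to run the classical level-set (symmetrization) argument, taking the geometric Alexandrov--Bol isoperimetric inequality as the only external input. First I would dispose of the trivial case: if $\int_{\mr^2}e^{2u}\ud x=+\infty$ there is nothing to prove, so I may assume the total metric area $A:=\int_{\mr^2}e^{2u}\ud x$ is finite. Since $e^{2u}\geq e^{2\inf u}$ pointwise, finiteness of $A$ forces $\inf_{\mr^2}u=-\infty$; this fact will drive the final contradiction. For a regular value $t$ of $u$ (a.e. $t$, by Sard's theorem) I would introduce the superlevel set $\Omega_t=\{u>t\}$, its metric area $\mu(t)=\int_{\Omega_t}e^{2u}\ud x$, and the metric length of its boundary $L(t)=\int_{\{u=t\}}e^{u}\ud\sigma$. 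Observe that $\mu(t)\downarrow 0$ as $t\uparrow\sup u$ and $\mu(t)\uparrow A$ as $t\downarrow-\infty$, and that $\mu$ is absolutely continuous by the coarea formula.

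The heart of the argument is a differential inequality for $\mu$. By the coarea formula one has $-\mu'(t)=\int_{\{u=t\}}\frac{e^{2u}}{|\nabla u|}\ud\sigma$, and Cauchy--Schwarz gives
$$L(t)^2=\Big(\int_{\{u=t\}}e^{u}\ud\sigma\Big)^2\leq\Big(\int_{\{u=t\}}\frac{e^{2u}}{|\nabla u|}\ud\sigma\Big)\Big(\int_{\{u=t\}}|\nabla u|\ud\sigma\Big)=(-\mu'(t))\int_{\{u=t\}}|\nabla u|\ud\sigma.$$
The divergence theorem on $\Omega_t$, together with the equation and the hypothesis $f\leq 1$, controls the last factor: since $\nabla u$ points into $\Omega_t$ along $\{u=t\}$,
$$\int_{\{u=t\}}|\nabla u|\ud\sigma=-\int_{\Omega_t}\Delta u\,\ud x=\int_{\Omega_t}f e^{2u}\ud x\leq\int_{\Omega_t}e^{2u}\ud x=\mu(t).$$
Because the conformal metric $e^{2u}|\ud x|^2$ has Gaussian curvature $K=-e^{-2u}\Delta u=f\leq 1$, the Alexandrov--Bol inequality in its isoperimetric form $L^2\geq A(4\pi-A)$, valid on domains with $K\leq 1$ (cf.\ \cite{Ba}), applied to $\Omega_t$ gives $L(t)^2\geq\mu(t)\big(4\pi-\mu(t)\big)$. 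Combining the three displays and cancelling the factor $\mu(t)>0$ yields
$$-\mu'(t)\geq 4\pi-\mu(t)\qquad\text{for a.e. }t.$$

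To conclude, suppose for contradiction that $A<4\pi$. Then $4\pi-\mu(t)\geq 4\pi-A>0$ for every $t$, so $-\mu'(t)\geq 4\pi-A$ throughout, and integrating over the range of $u$,
$$A=\int_{\inf u}^{\sup u}(-\mu'(t))\,\ud t\geq (4\pi-A)\,(\sup u-\inf u)=+\infty,$$
using $\inf u=-\infty$. This is absurd since $A<\infty$, so $A\geq 4\pi$. For the equality statement, when $A=4\pi$ the rigidity of the Alexandrov--Bol inequality forces each $\Omega_t$ to be a geodesic cap of the round sphere, and correspondingly the inequality $\int_{\Omega_t}(1-f)e^{2u}\ud x\geq 0$ used above must saturate for a.e. $t$; since $1-f\geq 0$ and $e^{2u}>0$, this gives $f\equiv 1$ on $\bigcup_t\Omega_t=\mr^2$, whence $f\equiv 1$ by smoothness (alternatively one invokes the classification of finite-area solutions as in \cite{CL91}).

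The genuinely geometric ingredient is the Alexandrov--Bol inequality $L^2\geq A(4\pi-A)$ under $K\leq 1$, which I would quote from \cite{Ba} (or reprove by the standard spherical symmetrization comparison); this, rather than the elementary ODE step, is where the real content lies. The main technical difficulties I anticipate are in making the level-set calculus rigorous: selecting regular values via Sard, justifying the coarea and divergence identities on the possibly unbounded superlevel sets $\Omega_t$ (which have finite measure and, for a.e. $t$, finite perimeter), and accommodating the fact that Bol's inequality in its basic form presumes simple connectivity—a point handled either by passing to a simply connected replacement obtained by filling bounded complementary components, or by invoking the form of the inequality valid without the topological restriction. I expect verifying that $\Omega_t$ and $\{u=t\}$ carry the regularity required for these manipulations, uniformly for a.e.\ $t$, to be the most delicate part of the write-up.
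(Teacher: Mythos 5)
A preliminary remark: the paper itself gives no proof of this lemma; it only cites Lemma 1.1 of \cite{CL91} (Ding's lemma), Proposition 8.5 of \cite{GL} and Lemma 2.6 of \cite{LX23}. So your proposal has to be measured against the classical argument, whose architecture (superlevel sets $\Omega_t=\{u>t\}$, coarea, Cauchy--Schwarz, divergence theorem, an isoperimetric input) you correctly reproduce. However, there are genuine gaps at the points you yourself flag, and one more that you do not.

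First, the geometric input is mis-chosen and the two escape routes you offer do not both exist. The Alexandrov--Bol inequality $L^2\geq A(4\pi-A)$ is genuinely false for multiply connected domains: on the punctured disk with the cusp metric $\bigl(|w|\log(1/|w|)\bigr)^{-2}|\ud w|^2$ (curvature $-1\leq 1$), the annuli $\{\delta_1<|w|<\delta_2\}$ with $\delta_1\to0$ have $L^2\approx A^2\ll A(4\pi-A)$. So there is no ``form of the inequality valid without the topological restriction.'' Moreover, since $f$ may be negative, $u$ is not superharmonic and the sets $\Omega_t$ really can have holes, so the issue cannot be ignored. Your other fix, filling the holes, can be made to work but not as a one-line patch: after filling, the area increases and $s\mapsto s(4\pi-s)$ is not monotone beyond $2\pi$, so one cannot simply transfer Bol back to $\Omega_t$; the correct repair is to run \emph{all three} steps (Bol, Cauchy--Schwarz, divergence theorem) on a single filled component $\tilde U$, observing that $\partial\tilde U\subset\{u=t\}$, that under the contradiction hypothesis $\int_{\tilde U}e^{2u}\ud x\leq A<4\pi$, and hence that $-\mu'(t)\geq\int_{\partial\tilde U}e^{2u}|\nabla u|^{-1}\ud\sigma\geq 4\pi-A$. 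Separately, the divergence-theorem identity $\int_{\{u=t\}}|\nabla u|\ud\sigma=\int_{\Omega_t}fe^{2u}\ud x$ is unjustified when $\Omega_t$ (or a component of it) is unbounded: with $f$ unbounded below, $\int_{\Omega_t}|\Delta u|\ud x$ may be infinite, and nothing controls $|\nabla u|$ on $\Omega_t\cap\partial B_R$ to kill the boundary term at infinity; superharmonicity, which disposes of this in Ding's original case $f\equiv1$, is unavailable here. You acknowledge this but defer it, and it is not a routine verification --- it is precisely what the cited Proposition 8.5 of \cite{GL} has to settle. Note that the classical Ding/Chen--Li route uses the \emph{Euclidean} isoperimetric inequality $|\{u=t\}|^2\geq4\pi|\Omega_t|_{euc}$, which holds for arbitrary bounded open sets and makes the whole topological discussion unnecessary.

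Second, the equality case as written is a non sequitur. From $A=4\pi$, your differential inequality $-\mu'(t)\geq4\pi-\mu(t)$ only integrates to $4\pi\geq\int(4\pi-\mu)\ud t$, which is perfectly consistent with strict inequality $\int_{\Omega_t}(1-f)e^{2u}\ud x>0$ on a half-line of $t$'s (the function $g=4\pi-\mu$ may satisfy $g'\geq(1+\sigma)g$ there without any contradiction); so ``must saturate for a.e.\ $t$'' does not follow, nor does equality in Bol, so the rigidity you invoke is not available. The standard way to extract rigidity is to integrate the chain \emph{before} cancelling: with $F(t)=\int_{\Omega_t}e^{2u}\ud x$ one has the identities $\int F(-F')\ud t=\tfrac12A^2$ and, by Fubini, $4\pi\int e^{2t}|\Omega_t|_{euc}\ud t=2\pi A$, while the chain gives $F(-F')\geq L(t)^2\geq 4\pi e^{2t}|\Omega_t|_{euc}$; thus $\tfrac12A^2\geq2\pi A$, and when $A=4\pi$ the two ends coincide, forcing equality a.e.\ in every link, in particular $\int_{\Omega_t}(1-f)e^{2u}\ud x=0$ for a.e.\ $t$ and hence $f\equiv1$ on $\bigcup_t\Omega_t=\mathbb{R}^2$. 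Your proof needs this integrated bookkeeping (or an equivalent device); without it the equality statement is unproven.
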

In this section, we are going to deal with polynomial Q-curvature.
\begin{theorem}\label{thm: bol's inequality for R4}
	Consider  $u(x)$ is a normal  solution to \eqref{normal solution} on $\mr^4$ where   $Q(x)\leq 6$ is a polynomial. Then
	there holds
	$$\int_{\mr^4}e^{4u}\ud x \geq |\mathbb{S}^4|$$
	with equality holds if and only if $Q(x)\equiv6$.
\end{theorem}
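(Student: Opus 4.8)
The plan is to prove Theorem \ref{thm: bol's inequality for R4} by reducing the polynomial-$Q$ case on $\mr^4$ to the radially symmetric result already secured in Theorem \ref{thm: radial solution for Q leq (n-1)!}. The key structural fact I would exploit is that when $Q$ is a polynomial, the normal solution has a very rigid asymptotic expansion at infinity, governed entirely by the integrated curvature $\alpha$. From the integral representation \eqref{normal solution} with $n=4$, one reads off that $u(x) = -\alpha \log|x| + o(\log|x|)$ as $|x|\to\infty$, and since $\int_{\mr^4} Q e^{4u}\,\ud x < \infty$ forces integrability of a polynomial times $e^{4u}$, the decay $e^{4u} \sim |x|^{-4\alpha}$ must be fast enough to kill any polynomial growth of $Q$; this pins down $\alpha$ to lie in a restricted range and is the mechanism that makes the polynomial case tractable.

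First I would record the precise asymptotics of a normal solution with polynomial $Q$: writing $Q(x) = \sum_{|\beta|\le d} a_\beta x^\beta$, the finiteness of $\int Q e^{4u}$ together with the logarithmic growth of $u$ yields $4\alpha > 4 + d$, i.e. the volume-decay exponent must strictly dominate the polynomial degree. Second, I would analyze the sign constraint imposed by $Q \le 6 = (n-1)!$ at $n=4$. The crucial point is that a nonconstant polynomial bounded above by a constant must tend to $-\infty$ in some direction, so $Q$ takes arbitrarily negative values; I would use this to control $\alpha$ and, more importantly, to set up a comparison. Third, I would attempt a symmetrization or spherical-average argument: replacing $u$ by a radial comparison function built from the spherical average of $e^{4u}$, and showing that the associated radial curvature still satisfies the upper bound $Q^\sharp \le 6$ while preserving the total volume, so that Theorem \ref{thm: radial solution for Q leq (n-1)!} applies directly to the symmetrized problem.

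The main obstacle I anticipate is precisely this symmetrization step: unlike the second-order ($n=2$) case, the operator $(-\Delta)^2$ lacks a clean maximum principle, so one cannot naively symmetrize $u$ and expect the Q-curvature bound to be inherited. I would therefore instead try to exploit the polynomial structure more directly — the fact that a polynomial $Q\le 6$ that is not identically $6$ has its positivity set $\{Q > 0\}$ bounded (if the leading behaviour forces $Q\to-\infty$), allowing the positive part of the curvature to be concentrated in a compact region. The plan would then be to split $\int Q e^{4u}$ into the contribution from $\{Q>0\}$ and $\{Q\le 0\}$, bound the former by $6\int_{\{Q>0\}} e^{4u}$, and combine with the general lower bound from Theorem \ref{thm: vol geq 1/2 } to force $\int e^{4u}\ge|\mathbb{S}^4|$.

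The equality analysis is where the argument must be sharpest: I expect equality to propagate back through each inequality only when $\{Q\le 0\}$ has zero $e^{4u}$-measure and the symmetrized solution is the round sphere, which together force $Q\equiv 6$ and $u$ to be the standard bubble. I would close by verifying that $Q\equiv 6$ indeed saturates the bound, recovering the standard conformal factor on $\mathbb{S}^4$. The most delicate technical risk is ensuring that the asymptotic expansion and the compactness of $\{Q>0\}$ are genuinely forced by the hypotheses rather than assumed, so I would devote care to the step deriving $4\alpha > 4 + \deg Q$ and its consequences for the geometry at infinity.
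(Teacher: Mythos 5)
Your proposal has a genuine gap, and it sits exactly where the proof is hardest: the case $\deg Q=2$. Your fallback argument --- split $\int_{\mr^4}Qe^{4u}\ud x$ over $\{Q>0\}$ and $\{Q\le 0\}$, bound the positive part by $6\int_{\mr^4}e^{4u}\ud x$, and invoke Theorem \ref{thm: vol geq 1/2 } --- cannot close. The splitting only yields $3\alpha|\mathbb{S}^4|=\int_{\mr^4}Qe^{4u}\ud x\le 6\int_{\mr^4}e^{4u}\ud x$, i.e.\ $\int_{\mr^4}e^{4u}\ud x\ge \tfrac{\alpha}{2}|\mathbb{S}^4|$, so you would need $\alpha\ge 2$; Theorem \ref{thm: vol geq 1/2 } supplies only the constant $\tfrac12$, which is half of what is needed and cannot be boosted by this route. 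For $\deg Q\ge 4$ the bound $\alpha\ge 1+\deg Q/4\ge 2$ from the {\bf s-cone} condition (Lemma \ref{lem:s-cone} plus Lemma \ref{lem:alpha geq tau Q}) does make such an argument work, and this is essentially what the paper does for high degree. But for quadratic $Q$ that lemma gives only $\alpha\ge 3/2$, and in fact the paper's Pohozaev identity (Lemma \ref{lem: Pohozaev identity}, applicable via the decay estimate of Lemma \ref{lem: Qe^nu leq  C|x|^-n}) shows $\alpha(\alpha-2)<0$, i.e.\ $\alpha<2$ strictly in this case --- so the inequality $\alpha\ge 2$ your plan needs is actually \emph{false} precisely in the one remaining case. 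A second factual error: a nonconstant polynomial $Q\le 6$ need not have bounded positivity set ($Q=6-x_1^2$ on $\mr^4$ is positive on an unbounded slab), so the picture of compactly concentrated positive curvature is not forced by the hypotheses.

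What is missing from your toolkit is the Pohozaev identity, which is the engine of the paper's proof in the quadratic case. Writing $Q=a+\sum_{i=1}^4 a_ix_i^2$ with $a_i\le 0$ after a rotation and translation, the identity gives $\alpha(\alpha-2)=\frac{1}{8\pi^2}\int_{\mr^4}\sum_i a_ix_i^2e^{4u}\ud x<0$, hence $3/2\le\alpha<2$ and $a>0$; rewriting the same identity yields the exact formula $\int_{\mr^4}e^{4u}\ud x=\frac{8\pi^2}{a}\alpha(3-\alpha)$, and since $\alpha(3-\alpha)>2$ on $[3/2,2)$ and $a\le 6$, one concludes $\int_{\mr^4}e^{4u}\ud x>\frac{8\pi^2}{3}=|\mathbb{S}^4|$. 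Your alternative reduction to Theorem \ref{thm: radial solution for Q leq (n-1)!} by symmetrization, which you yourself flag as dubious, indeed does not work: spherical averaging does not preserve normality of the solution, and there is no reason the resulting ``radial curvature'' stays $\le 6$. Note moreover that the paper proves the radial theorem itself by the same Pohozaev machinery (applied to an auxiliary normal solution $v$ with curvature $(n-1)!e^{nh}$), so this identity is unavoidable on either route.
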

\begin{proof}
	
	We only need to deal with the case $e^{4u}\in L^1(\mr^4)$.
	
	Firstly,	if $Q(x)$ is a constant, Lemma \ref{lem: alpha  geq 1} deduces  that $Q(x)$ must be a positive constant. The classification theorem in \cite{Lin} shows that
	$$\int_{\mr^4}Qe^{4u}\ud x=6|\mathbb{S}^4|$$
	which yields that
	$$\int_{\mr^4}e^{4u}\ud x\geq |\mathbb{S}^4|$$
	with $"="$ holds if and only if $Q\equiv 6$.
	
	Secondly, if  $Q(x)$ is a non-constant  polynomial with $Q(x)\leq 6$, then the degree of $Q(x)$ must be an even integer.
	Lemma \ref{lem:s-cone} yields that $Q(x)$ satisfying {\bf s-cone} condition with $s=\deg Q$.
	If $\deg Q\geq 4$, Lemma \ref{lem:alpha geq tau Q} concludes that
	$$\int_{\mr^4}Qe^{4u}\ud x\geq 6|\mathbb{S}^4|.$$ Since non-constant $Q(x)\leq 6$,  we obtain that
	$$\int_{\mr^4}e^{4u}\ud x>|\mathbb{S}^4|.$$
	
	Finally, the  remaining case is $\deg Q(x)=2$. Since $Q(x)\leq 6$, up to a rotation and a translation of the coordinates, we may suppose that
	$$Q(x)=a+\sum_{i=1}^4a_ix_i^2$$
	where $a_i\leq 0$, $\sum_{i=1}^4a_i^2\not=0$ and $a\leq 6$. Using $e^{4u}\in L^1(\mr^4)$ and $Qe^{4u}\in L^1(\mr^4)$, one has
	$$\sum^4_{i=1}a_ix_i^2e^{4u}\in L^1(\mr^4)$$ which yields that $x\cdot\nabla Qe^{4u}\in L^1(\mr^4)$.
	Applying Lemma \ref{lem: Pohozaev identity} and  Lemma \ref{lem: Qe^nu leq  C|x|^-n}, there holds
	\begin{equation}\label{equ: PH for R^4}
		\alpha(\alpha-2)=\frac{1}{16\pi^2}\int_{\mr^4}x\cdot \nabla Q e^{4u}\ud x\\
		=\frac{1}{8\pi^2}\int_{\mr^4}\sum^4_{i=1}a_ix_i^2e^{4u}\ud x<0.
	\end{equation}
	Combing \eqref{equ: PH for R^4} with Lemma \ref{lem:alpha geq tau Q}, one has
	\begin{equation}\label{alpha bound for R4}
		\frac{3}{2}\leq \alpha<2.
	\end{equation}
	Immediately, one has $a>0.$
	Using the identity \eqref{equ: PH for R^4} again, one has
	$$\int_{\mr^4}e^{4u}\ud x=\frac{8\pi^2}{a}\alpha(3-\alpha).$$
	Making using of the estimate \eqref{alpha bound for R4} and  $a\leq 6$, there holds
	$$\int_{\mr^4}e^{4u}\ud x>\frac{8\pi^2}{3}=|\mathbb{S}^4|.$$
	
	Finally, we finish our proof.
	
\end{proof}

For $n\geq 6$, due to  our technical constraints, we need introduce additional assumptions on the polynomial $Q$.
\begin{theorem}\label{thm: bol's inequality for n geq 6}
	Consider the normal solution $u(x)$ to \eqref{normal solution} on $\mr^n$  where even integer $n\geq 6$ with  $Q(x)=(n-1)!+\varphi(x)$ where $\varphi(x)\leq 0$ is a polynomial. If  $4\leq \deg\varphi \leq n-2$, we additionally  assume that
	\begin{equation}\label{homogeneous property}
		x\cdot \nabla \varphi(x)\geq (\deg\varphi )\varphi(x)
	\end{equation}
	up to a rotation or a translation of the coordinates.
	Then there holds
	\begin{equation}\label{voulme for R6}
		\int_{\mr^n}e^{nu}\ud x\geq |\mathbb{S}^n|
	\end{equation}
	with equality  holds if and only if $\varphi\equiv 0.$
\end{theorem}

\begin{proof}
	If $\int_{\mr^n}e^{nu}\ud x=+\infty$, it is trvial that the estimate \eqref{voulme for R6} holds . We just need to consider the case $e^{nu}\in L^1(\mr^n)$. Then one has
	$$\int_{\mr^n}|\varphi|e^{nu}\ud x\leq (n-1)!\int_{\mr^n}e^{nu}\ud x+\int_{\mr^n}|(n-1)!+\varphi(x)|e^{nu}\ud x<+\infty.$$
	If $\varphi$ is a constant, with help of Lemma \ref{lem: alpha  geq 1} and the the classification theorem in \cite{WX}, \cite{Mar MZ} or \cite{Xu05}, we must have $\varphi>-(n-1)!$ and
	$$\int_{\mr^n}e^{nu}\ud x=\frac{(n-1)!}{(n-1)!+\varphi} |\mathbb{S}^n|\geq|\mathbb{S}^n|$$
	with equality holds if and only if $\varphi\equiv0.$
	
	Now, we are going to deal with the non-constant case. Due to $\varphi\leq 0$, the degree of $\varphi$ must be an even integer.
	If $\deg\varphi=2$, following the same argument in the proof of Theorem \ref{thm: bol's inequality for R4}, there holds
	$$\int_{\mr^n}e^{nu}\ud >|\mathbb{S}^n|.$$
	When $\deg(\varphi)\geq n$, with help of Lemma \ref{lem:alpha geq tau Q}, one has $\alpha\geq 2$.
	Then there holds
	$$\int_{\mr^n}e^{nu}\ud x> \int_{\mr^n}\frac{(n-1)!+\varphi}{(n-1)!}e^{nu}\ud x\geq |\mathbb{S}^n|.$$
	If  $4\leq \deg\varphi <n$ and $\alpha\geq 2$, due to  the same reason,  the estimate \eqref{voulme for R6} still holds.
	Thus the remaining case is $4\leq \deg\varphi\leq n-2$ and $\alpha<2$. With help of Lemma \ref{lem:alpha geq tau Q}, there holds
	\begin{equation}\label{alpha bound for n geq 6}
		1+	\frac{\deg\varphi}{n}\leq \alpha<2.
	\end{equation}
	Making use  of Lemma \ref{lem: Pohozaev identity}, there exists a sequence  $R_i\to\infty$  as $i\to\infty$ such that
	$$\alpha(\alpha-2)=\frac{4}{n!|\mathbb{S}^n|}\lim_{i\to\infty}\int_{B_{R_i}(0)}x\cdot \nabla \varphi e^{nu}\ud x.$$
	Using the assumption $x\cdot\nabla\varphi\geq \deg(\varphi)\varphi,$ one has
	$$\int_{B_{R_i}(0)}x\cdot \nabla \varphi e^{nu}\ud x\geq \deg(\varphi)\int_{B_{R_i}(0)}\varphi e^{nu}\ud x.$$
	Then a direct computation yields that
	\begin{align*}
		\alpha(\alpha-2)\geq &\frac{4}{n!|\mathbb{S}^n|} \deg(\varphi)\int_{\mr^n}\varphi e^{nu}\ud x\\
		=&\frac{2\deg (\varphi)}{n}\alpha-\frac{4\deg\varphi}{n|\mathbb{S}^n|}\int_{\mr^n}e^{nu}\ud x
	\end{align*}
	which shows that
	$$\int_{\mr^n}e^{nu}\ud x\geq \frac{n|\mathbb{S}^n|}{4\deg\varphi}\alpha(2+\frac{2\deg\varphi}{n}-\alpha).$$
	Using \eqref{alpha bound for n geq 6}, one has
	$$\int_{\mr^n}e^{nu}\ud x>|\mathbb{S}^n|.$$
	
	Finally, we finish our proof.
\end{proof}

\section{Bol's inequality for radial solutions}\label{section: Bol's inequality for radial solution}
\begin{theorem}\label{thm: radial solution for Q leq (n-1)!}
	Supposing that $u(x)$ is a radially symmetric and  normal  solution  to \eqref{normal solution} with $$Q(x)\leq (n-1)!$$ where even integer $n\geq 4$. Then there holds
	$$\int_{\mr^n}e^{nu}\ud x\geq|\mathbb{S}^n|$$
	with equality holds if and only if $Q\equiv(n-1)!$.
\end{theorem}
\begin{proof}
	We just need to deal with the case $e^{nu}\in L^1(\mr^n)$.  Set
	\begin{equation}\label{normal v}
		v(x):=\frac{2}{(n-1)!|\mathbb{S}^n|}\int_{\mr^n}\log\frac{|y|}{|x-y|}(n-1)!e^{nu(y)}\ud y.
	\end{equation}
	Firstly, we claim that $v(x)$ is also radial symmetric. For any rotation $T$, we have $u(Ty)=u(y)$ due to $u(x)$ is radial symmetric. In fact, by rotating the coordinates, there holds
	\begin{align*}
		v(Tx)=&\frac{2}{(n-1)!|\mathbb{S}^n|}\int_{\mr^n}\log\frac{|y|}{|Tx-y|}(n-1)!e^{nu(y)}\ud y\\
		=&\frac{2}{(n-1)!|\mathbb{S}^n|}\int_{\mr^n}\log\frac{|Ty|}{|Tx-Ty|}(n-1)!e^{nu(Ty)}\ud y\\
		=&\frac{2}{(n-1)!|\mathbb{S}^n|}\int_{\mr^n}\log\frac{|y|}{|x-y|}(n-1)!e^{nu(y)}\ud y\\
		=&v(x).
	\end{align*}
Setting  $h:=u-v$.
For $n\geq 4$,
a direct computation yields that
\begin{equation}\label{Delat h geq 0}
	\Delta h=\frac{2(n-2)}{(n-1)!|\mathbb{S}^n|}\int_{\mr^n}\frac{1}{|x-y|^2}\left((n-1)!-Q\right)e^{nu}\ud y\geq 0
\end{equation}
 and there holds
$$(-\Delta)^{\frac{n}{2}}v=(n-1)!e^{nh}e^{nv}.$$
Combining with \eqref{normal v}, we find that $v$ is a normal solution with Q-curvature equal to $(n-1)!e^{nh}$.
For brevity, we denote the normalized integrated Q-curvature as
$$\alpha_0:=\frac{2}{(n-1)!|\mathbb{S}^n|}\int_{\mr^n}(n-1)!e^{nh}e^{nv}\ud x=\frac{2}{|\mathbb{S}^n|}\int_{\mr^n}e^{nu}\ud x.$$
By using Lemma \ref{lem: PH for radial  symmetric}, there exists a sequence $R_i\to\infty$ such that
\begin{equation}\label{Ph inequa}
	\lim_{i\to \infty}\sup\frac{4}{n!|\mathbb{S}^n|}\int_{B_{R_i}(0)}x\cdot \nabla ((n-1)!e^{nh})e^{nv}\ud x= \alpha_0(\alpha_0-2).
\end{equation}
Using divergence theorem and the condition that  $u(x)$ is radially symmetric, there holds
\begin{align*}
	&\frac{4}{n!|\mathbb{S}^n|}\int_{B_{R_i}(0)}x\cdot \nabla ((n-1)!e^{nh})e^{nv}\ud x\\
	=&\frac{4}{|\mathbb{S}^n|}\int_{B_{R_i}(0)}x\cdot \nabla h e^{nu}\ud x\\
	=&\frac{4}{|\mathbb{S}^n|}\int_0^{R_i}\int_{\mathbb{S}^{n-1}(r)}r\frac{\partial h}{\partial r} e^{nu}\ud\sigma\ud r\\
	=&\frac{4}{|\mathbb{S}^n|}\int_0^{R_i} re^{nu(r)}\int_{\mathbb{S}^{n-1}(r)}\frac{\partial h}{\partial r} \ud\sigma\ud r\\
	=&\frac{4}{|\mathbb{S}^n|}\int_0^{R_i}re^{nu(r)}\int_{B_r(0)}\Delta h\ud x\ud r\\
	\geq &0
\end{align*}
where the last term comes from \eqref{Delat h geq 0}.
Finally, the estimate \eqref{Ph inequa} yields that
$$\alpha_0\geq 2$$
i.e.
$$\int_{\mr^n}e^{nu}\ud x\geq |\mathbb{S}^n|$$
when the  equality achieves, one has
$\Delta h\equiv 0$ which yields that
$Q\equiv (n-1)!$.

On the other hand, if $Q\equiv (n-1)!$,  using  Lemma \ref{lem: Pohozaev identity} and Lemma \ref{lem: Qe^nu leq  C|x|^-n}, we also obtain that $\alpha_0=2$.

\end{proof}

\begin{theorem}\label{thm: radial solution for Q geq (n-1)!}
	Supposing that $u(x)$ is a radially symmetric and  normal  solution  to \eqref{normal solution} with $$Q(x)\geq (n-1)!$$ where even integer $n\geq 4$. Then there holds
	$$\int_{\mr^n}e^{nu}\ud x\leq|\mathbb{S}^n|$$
	with equality holds if and only if $Q\equiv(n-1)!$.
\end{theorem}
\begin{proof}
	 Set
	$$v(x):=\frac{2}{(n-1)!|\mathbb{S}^n|}\int_{\mr^n}\log\frac{|y|}{|x-y|}(n-1)!e^{nu(y)}\ud y.$$
	Firstly, we know that $v(x)$ is also radial symmetric following the same argument before. Samely,
	setting $h:=u-v$.
For $n\geq 4$, one has
	\begin{equation}\label{Delat h leq 0}
		\Delta h=\frac{2(n-2)}{(n-1)!|\mathbb{S}^n|}\int_{\mr^n}\frac{1}{|x-y|^2}\left((n-1)!-Q\right)e^{nu}\ud y\leq 0
	\end{equation}
	and there holds
	$$(-\Delta)^{\frac{n}{2}}v=(n-1)!e^{nh}e^{nv}.$$
	For brevity, we denote
	$$\alpha_0:=\frac{2}{(n-1)!|\mathbb{S}^n|}\int_{\mr^n}(n-1)!e^{nh}e^{nv}\ud x=\frac{2}{|\mathbb{S}^n|}\int_{\mr^n}e^{nu}\ud x.$$
	Notice that $(n-1)!e^{nh}e^{nv}=(n-1)!e^{nu}$ is radially symmetric.
	By using Lemma \ref{lem: PH for radial  symmetric}, there exists a sequence $R_i\to\infty$ such that
	\begin{equation}\label{Ph inequa 1}
		\lim_{i\to \infty}\frac{4}{n!|\mathbb{S}^n|}\int_{B_{R_i}(0)}x\cdot \nabla ((n-1)!e^{nh})e^{nv}\ud x= \alpha_0(\alpha_0-2).
	\end{equation}
	Using divergence theorem, there holds
	\begin{align*}
		&\frac{4}{n!|\mathbb{S}^n|}\int_{B_{R_i}(0)}x\cdot \nabla ((n-1)!e^{nh})e^{nv}\ud x\\
		=&\frac{4}{|\mathbb{S}^n|}\int_{B_{R_i}(0)}x\cdot \nabla h e^{nu}\ud x\\
		=&\frac{4}{|\mathbb{S}^n|}\int_0^{R_i}\int_{\mathbb{S}^{n-1}(r)}r\frac{\partial h}{\partial r} e^{nu}\ud\sigma\ud r\\
		=&\frac{4}{|\mathbb{S}^n|}\int_0^{R_i} re^{nu(r)}\int_{\mathbb{S}^{n-1}(r)}\frac{\partial h}{\partial r} \ud\sigma\ud r\\
		=&\frac{4}{|\mathbb{S}^n|}\int_0^{R_i}re^{nu(r)}\int_{B_r(0)}\Delta h\ud x\ud r\\
		\leq &0.
	\end{align*}
	Finally, the estimate \eqref{Ph inequa 1} yields that
	$$\alpha_0\leq 2$$
	i.e.
	$$\int_{\mr^n}e^{nu}\ud x\leq |\mathbb{S}^n|$$
	when the  equality achieves, one has
	$\Delta h\equiv 0$ which yields that
	$Q\equiv (n-1)!$.
	On the other hand, if $Q\equiv (n-1)!$, due to the same reason as before, one has
	$$\int_{\mr^n}e^{nu}\ud x=|\mathbb{S}^n|.$$

\end{proof}

Making  use of  the classification theorem for normal solutions with $Q=(n-1)!$ in \cite{CL91},  \cite{Lin}, \cite{CY MRL}, \cite{WX}, \cite{Mar MZ},  \cite{Xu05} and  Lemma \ref{lema: normal}, we  finish the proof of  Theorem \ref{thm: Q leq n-1 R geq0} and Theorem \ref{thm: Q geq n-1 R geq0}.

\section{Volume comparison for compact manifolds}\label{section: compact}

In this section, we are going to deal with compact manifolds.

{\bf Proof of Theorem \ref{thm: Q_  geq n-1, R_g geq 0 for compact}:}

Firstly, since $R_g\geq 0$, it is easy to see that  the Yambe invariant is non-negative which is defined by
$$Y(M^4,[g]):=\inf_{u\in H^1(M,g)\backslash \{0\}}\frac{\int_{M^4}6|\nabla u|^2+R_gu^2\ud\mu_g}{\left(\int_{M^4}|u|^{4}\ud\mu_g\right)^{\frac{1}{2}}}.$$
It is well-known that the Yamabe invariant has a sharp upper bound
\begin{equation}\label{Y leq S^4}
	Y(M^4,[g])\leq Y(\ms^4, [g_0])=12|\ms^4|^{\frac{1}{2}}
\end{equation}
where $(\ms^4,g_0)$ is the standard sphere.
More details can be found in \cite{Lee-Park}.
Based on the solution of the Yamabe problem, there exists a conformal metric $g_1=e^{2\varphi}g$  called Yambe metric such that the scalar curvature $R_{g_1}$  is a constant satisfying
$$Y(M^4,[g])=\frac{\int_{M^4}R_{g_1}\ud\mu_{g_1}}{V(M^4,g_1)^{\frac{1}{2}}}$$
Up to a scaling, one may assume that
\begin{equation}\label{V_g_1}
	V(M^4,g_1)=|\ms^4|
\end{equation} and then one has
\begin{equation}\label{R_g_1}
	R_{g_1}= Y(M^4,[g])|\ms^4|^{-\frac{1}{2}}.
\end{equation}

By the Chern-Gauss-Bonnet formula, we know that the integral of Q-curvature is conformal invariant i.e.
\begin{equation}\label{Q invariant}
	\int_{M^4}Q_g\ud\mu_{g}=\int_{M^4}Q_{g_1}\ud\mu_{g_1}.
\end{equation}
By using the definition of Q-curvature \eqref{Q_g} and inserting \eqref{R_g_1} as well as \eqref{Y leq S^4}, one has
\begin{align*}
	\int_{M^4}Q_g\ud\mu_{g}=&\int_{M^4}Q_{g_1}\ud\mu_{g_1}\\
	=&-\frac{1}{2}\int_{M^4}|Ric_{g_1}|^2\ud\mu_{g_1}+\frac{1}{6}\int_{M^4}R_{g_1}^2\ud\mu_{g_1}\\
	=&-\frac{1}{2}\int_{M^4}|Ric_{g_1}-\frac{R_{g_1}}{4}g_1|^2\ud\mu_{g_1}+\frac{1}{24}\int_{M^4}R_{g_1}^2\ud\mu_{g_1}\\
	=&-\frac{1}{2}\int_{M^4}|Ric_{g_1}-\frac{R_{g_1}}{4}g_1|^2\ud\mu_{g_1}+\frac{1}{24}Y(M^4,[g])^2\\
	\leq &6|\ms^4|.
\end{align*}
If $Q_g\geq 6$, it is easy to see that
$$V(M,g)\leq|\ms^4|.$$
When the equality holds, we have
$$Q_g\equiv 6, \quad Ric_{g_1}=\frac{R_{g_1}}{4}g_1,\quad Y(M^4,[g])=Y(\ms^4, [g_0]).$$
Then with help of \eqref{Y leq S^4}, \eqref{R_g_1}, we have
$$Ric_{g_1}=3g_1.$$
Then Bishop-Gromov volume comparison theorem shows that
$$V(M^4,g_1)\leq |\ms^4|$$
with equality holds if and only if $(M^4,g_1)$ is isometric to standard sphere $(\ms^4, g_0).$ By using \eqref{V_g_1}, we show that  $(M^4, g_1)$ is isometric  to standard sphere $(\ms^4, g_0).$ In other words, there is a conformal metric $\tilde g=e^{2\psi}g_0$ on $\ms^4$ such that $(M^4, g)$ is isometric to $(\ms^4, \tilde g)$.
Moreover, noting that $Q_g=6$, with help of the classification theorem in \cite{CY MRL}, we show that such conformal metric $\tilde g$
 can be written as  $$\tilde g=\left(\frac{2\lambda}{\lambda^2+|x|^2}\right)^2|dx|^2, \mathrm{on}\; \mr^4$$
for some $\lambda>0$ via a stereographic projection.  Thus we finish our proof.

\section{Applications}\label{section:application}

We are going to show some applications of  these higher order Bol's inequality.
In \cite{PT}, Poliakovsky and  Tarantello consider the equation
\begin{equation}\label{two dim case}
	-\Delta u=(1+|x|^{2p})e^{2u}
\end{equation}
where $p>0$ and $x\in \mathbb{R}^2$
with $\beta=\frac{1}{2\pi}\int_{\mr^2}(1+|x|^{2p})e^{2u}\ud x<+\infty$ to study
selfgravitating strings. They make use of very technical methods to show that
the existence 	of radial solution if and only if
\begin{equation}\label{beta range}
	\max\{2,2p\}<\beta<2+2p.
\end{equation}
Here, we will use Theorem \ref{thm: radial solution for Q geq (n-1)!} to show   for each radial solution, the estimate  \eqref{beta range} holds.
Firstly, due to $(1+|x|^{2p})e^{2u}\in L^1(\mr^2)$, Theorem 2.2 in \cite{Li 23 Q-curvature} shows that the solution $u(x)$ is normal.
Making use of the following theorem, we can easily show that
\eqref{beta range} is necessary for the existence of radial solutions.

For higher order cases, we will consider the normal solutions
\begin{equation}\label{normal for 1+|x|^p}
	u(x)=\frac{2}{(n-1)!|\mathbb{S}^n|}\int_{\mr^n}\log\frac{|y|}{|x-y|}(1+|x|^{np})e^{nu}\ud x+C
\end{equation}
with
$(1+|x|^{np})e^{nu}\in L^1(\mr^n)$.
Set the same notation as before
$$\beta:=\frac{2}{(n-1)!|\mathbb{S}^n|}\int_{\mr^n}(1+|x|^{np})e^{nu}\ud x.$$
\begin{theorem}
	For $p>0$, 	consider the normal solution to \eqref{normal for 1+|x|^p}  with even integer $n\geq2$.
	For each radial solution to \eqref{normal for 1+|x|^p}, there holds
	$$\max\{2,2p\}<\beta<2+2p.$$
\end{theorem}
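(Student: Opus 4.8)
The plan is to reduce everything to the two moments
$A:=\int_{\mr^n}e^{nu}\ud x$ and $B:=\int_{\mr^n}|x|^{np}e^{nu}\ud x$, obtaining the upper bound and the bound $\beta>2$ from the Pohozaev identity, and the sharp lower bound $\beta>2p$ from the volume comparison Theorem \ref{thm: radial solution for Q geq (n-1)!} applied to a normalized shift of $u$. Note first that $(1+|x|^{np})e^{nu}\in L^1$ forces $A,B<\infty$, so both moments are finite and positive. To run the Pohozaev identity I would check its hypothesis: since $Q(x)=1+|x|^{np}>0$ the negative part $Q^-$ vanishes identically, and $Q$ satisfies the {\bf s-cone} condition with $s=np$ (indeed $|Q|/|x|^{np}\geq 1$ on $\{|x|\geq 1\}$), so Lemma \ref{lem: Qe^nu leq  C|x|^-n} gives $|Q|e^{nu}\leq C|x|^{-n}$ near infinity. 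With $x\cdot\nabla Q=np|x|^{np}$, Lemma \ref{lem: Pohozaev identity} then yields
$$\beta(\beta-2)=\frac{4}{n!|\mathbb{S}^n|}\int_{\mr^n}x\cdot\nabla Q\,e^{nu}\ud x=\frac{4p}{(n-1)!|\mathbb{S}^n|}B,$$
which, together with the definition $A+B=\frac{(n-1)!|\mathbb{S}^n|}{2}\beta$, pins down
$$B=\frac{(n-1)!|\mathbb{S}^n|}{4p}\beta(\beta-2),\qquad A=\frac{(n-1)!|\mathbb{S}^n|}{4p}\beta(2p+2-\beta).$$

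Two of the three bounds are then immediate from positivity: $B>0$ forces $\beta>2$, and $A>0$ forces $\beta<2+2p$.

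The crucial remaining bound $\beta>2p$ is where I would invoke Theorem \ref{thm: radial solution for Q geq (n-1)!} after a normalizing shift. Setting $\bar u:=u-\frac{1}{n}\log((n-1)!)$ produces a radial normal solution with $Q$-curvature $\bar Q=(n-1)!\,(1+|x|^{np})$, which satisfies $(n-1)!\leq \bar Q\leq C(|x|+1)^{np}$; one checks the constant shift preserves both radial symmetry and normality. Theorem \ref{thm: radial solution for Q geq (n-1)!} then gives $\int_{\mr^n}e^{n\bar u}\ud x\leq|\mathbb{S}^n|$, with equality only if $\bar Q\equiv(n-1)!$. Since $|x|^{np}\not\equiv 0$ the inequality is strict, and $e^{n\bar u}=e^{nu}/(n-1)!$ translates this into $A<(n-1)!|\mathbb{S}^n|$. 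Feeding this into the formula for $A$ gives $\beta(2p+2-\beta)<4p$, i.e. $(\beta-2)(\beta-2p)>0$, and combined with $\beta>2$ from the Pohozaev step this forces $\beta>2p$. This single argument handles both regimes $p\leq 1$ and $p>1$ uniformly.

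The main obstacle is precisely this lower bound $\beta>2p$: the Pohozaev identity together with the estimate $\beta\geq 1+\frac{s}{n}=1+p$ of Lemma \ref{lem:alpha geq tau Q} only deliver $\beta>\max\{2,p+1\}$, which is strictly weaker than $\beta>2p$ exactly when $p>1$, and no integrability/asymptotic argument on $e^{nu}\approx|x|^{-n\beta}$ seems to close this gap. The key realization is that the threshold $2p$ is the endpoint of the downward parabola $\beta\mapsto\beta(2p+2-\beta)$ symmetric to the Pohozaev endpoint $2$, so that the \emph{sharp} volume bound $A<(n-1)!|\mathbb{S}^n|$ is equivalent to $\beta>2p$; producing that sharp volume bound is exactly what the shift $\bar u$ plus Theorem \ref{thm: radial solution for Q geq (n-1)!} supplies. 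The points needing care are verifying the hypotheses of that theorem for $\bar u$ and extracting strictness (ruling out $\beta=2p$) from its rigidity clause.
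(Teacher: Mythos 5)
Your proposal is correct and follows essentially the same route as the paper: the Pohozaev identity of Lemma \ref{lem: Pohozaev identity} (justified through Lemma \ref{lem: Qe^nu leq  C|x|^-n}) yields $2<\beta<2+2p$, and the radial volume bound of Theorem \ref{thm: radial solution for Q geq (n-1)!} yields $(\beta-2)(\beta-2p)>0$, which combine to give $\max\{2,2p\}<\beta<2+2p$. Your explicit constant shift $\bar u=u-\frac{1}{n}\log((n-1)!)$, producing the curvature $(n-1)!\,(1+|x|^{np})\geq (n-1)!$, is precisely the normalization the paper leaves implicit when it applies that theorem to $Q=1+|x|^{np}\geq 1$, so your write-up is, if anything, slightly more careful on that point.
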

\begin{proof}
	Making use of Lemma \ref{lem: Pohozaev identity} and Lemma \ref{lem: Qe^nu leq  C|x|^-n},
	the following Pohozaev identity holds
	\begin{equation}\label{PH for 1+|x|^np}
		\beta(\beta-2)=\frac{4p}{(n-1)!|\mathbb{S}^n|}\int_{\mr^n}|x|^{np}e^{nu}\ud x
	\end{equation}
which yields that $\beta>2$.
It is obvious to see that
$$\frac{4p}{(n-1)!|\mathbb{S}^n|}\int_{\mr^n}|x|^{np}e^{nu}\ud x<\frac{4p}{(n-1)!|\mathbb{S}^n|}\int_{\mr^n}(1+|x|^{np})e^{nu}\ud x=2p\beta.$$
Thus the identity \eqref{PH for 1+|x|^np} yields that
\begin{equation}\label{2<beta<2+2p }
	2<\beta <2+2p.
\end{equation}

On the other hand, the identity \eqref{PH for 1+|x|^np} is equivalent to
\begin{equation}\label{voluem e^nu}
	\int_{\mr^n}e^{nu}\ud x=\frac{(n-1)!|\mathbb{S}^n|}{4p}\beta(2+2p-\beta).
\end{equation}
For each radial solution to \eqref{normal for 1+|x|^p}, sicne $1+|x|^{np}\geq 1$, Theorem \ref{thm: radial solution for Q geq (n-1)!} yields that
\begin{equation}\label{voluem upper bound for 1+|x|^np}
	\int_{\mr^n}e^{nu}\ud x<(n-1)!|\mathbb{S}^n|.
\end{equation}
Combining \eqref{voluem e^nu} with \eqref{voluem upper bound for 1+|x|^np}, one has
$$(\beta-2)(\beta-2p)>0.$$
With help of \eqref{2<beta<2+2p }, we finally obtain
$$\max\{2, 2p\}<\beta<2+2p.$$
\end{proof}

With help of the above theorem, we answer an  open problem  in \cite{HM}. Combining with Theorem 1.5 in \cite{HM}, we obtain the following result which generalize the result of Poliakovsky and  Tarantello in \cite{PT} for $n=2$ to higher order cases.
\begin{corollary}
	For $n=4$, the integral equation \eqref{normal for 1+|x|^p} has radial normal solutions if and only if
	$$\max\{2,2p\}<\beta<2+2p.$$
\end{corollary}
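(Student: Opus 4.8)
The statement is an equivalence, and the cleanest route is to prove its two directions from two different sources. The plan is to obtain the necessity (``only if'') as an immediate consequence of the preceding theorem, and to import the sufficiency (``if'') from Theorem 1.5 of \cite{HM}. The substance contributed here lies entirely in the necessity: it is precisely the non-existence of radial normal solutions outside the interval $(\max\{2,2p\},2+2p)$ that was left open, whereas HM's construction already supplies solutions for every $\beta$ inside it.

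For the necessity I would argue as follows. Let $u$ be a radial normal solution of \eqref{normal for 1+|x|^p} on $\mr^4$. Its curvature $Q(x)=1+|x|^{4p}$ is radial, bounded below by $1$, and grows like $|x|^{4p}$, so it satisfies the {\bf s-cone} condition with $s=4p$ and all the hypotheses needed by the preceding theorem with $n=4$. That theorem then gives directly $\max\{2,2p\}<\beta<2+2p$. Tracing the mechanism, this bound rests on the Pohozaev identity of Lemma \ref{lem: Pohozaev identity}, which is available because the decay $|Q|e^{4u}\leq C|x|^{-4}$ near infinity is guaranteed by Lemma \ref{lem: Qe^nu leq  C|x|^-n}, together with the radial volume comparison of Theorem \ref{thm: radial solution for Q geq (n-1)!} that controls the upper endpoint. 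In particular, no radial normal solution can have $\beta\leq\max\{2,2p\}$ or $\beta\geq 2+2p$.

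For the sufficiency I would simply quote Theorem 1.5 of \cite{HM}, which for $n=4$ constructs, for each prescribed value $\beta\in(\max\{2,2p\},2+2p)$, a radial normal solution of \eqref{normal for 1+|x|^p} realizing that $\beta$. Gluing the two halves, existence holds exactly for $\beta$ in this open interval, which is the asserted ``if and only if'' and extends the $n=2$ theorem of Poliakovsky and Tarantello in \cite{PT} to the fourth-order case.

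The delicate point --- and the reason the corollary genuinely settles an open problem rather than being a formality --- is that the necessary interval produced by the preceding theorem must coincide with HM's existence interval, most sensitively at the lower endpoint $\beta=\max\{2,2p\}$ below which HM could not decide existence. Once one checks that the two intervals agree, the equivalence follows. I expect no further obstacle here, since the hard analytic work (the sharp two-sided bound on $\beta$) has already been carried out in the preceding theorem; the restriction to $n=4$ is imposed only by the availability of HM's existence result, as the necessity direction is valid for every even $n\geq 2$.
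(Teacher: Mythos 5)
Your proposal is correct and follows essentially the same route as the paper: the paper also obtains the necessity direction directly from the preceding theorem (itself resting on Lemma \ref{lem: Pohozaev identity}, Lemma \ref{lem: Qe^nu leq  C|x|^-n} and Theorem \ref{thm: radial solution for Q geq (n-1)!}), and imports the existence direction by combining with Theorem 1.5 of \cite{HM}. There is no gap; the two-directional argument you give is exactly the paper's proof.
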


\vspace{3em}
{\bf Acknowledgements:}  The  first author would like to thank Professor Xingwang Xu  for his helpful discussion and constant encouragement.     Besides, the first author also wants to thank Professor Yuxin Ge for his useful discussion and hospitality in University Paul Sabatier in Toulouse.


\begin{thebibliography}{99}
	\bibitem{Ba}
	C. 	Bandle, On a differential inequality and its applications to geometry. {\em Math. Zeit.} 147 (1976), no. 3, 253–261.
	\bibitem{Bra}
	T. P. Branson,  Differential operators canonically associated to a conformal structure. {\em Math. Scand.} 57:2 (1985), 293–345.
\bibitem{Bray}
H. L. Bray, The Penrose inequality in general relativity and volume comparison theorems involving scalar curvature, Ph.D. thesis, Stanford University, 1997, available at https://www.proquest.com/docview/304386501.
	\bibitem{Chang-Chen}
	S.-Y. A. Chang and W.  Chen,  A note on a class of higher order conformally covariant equations. {\em Discrete Contin. Dynam. Systems } 7 (2001), no. 2, 275–281.

	\bibitem{CQY}
	 S.-Y.  A. Chang, J.  Qing and P.  Yang,
	On the Chern-Gauss-Bonnet integral for conformal metrics on $R^4$, {\em Duke Math. J.} 103(2000), no.3, 523–544.
		\bibitem{CY95}
	S.-Y. A.  Chang and P. C. Yang,  Extremal metrics of zeta function determinants on 4-manifolds.  {\em Ann. of Math.} (2) 142 (1995), no. 1, 171–212.
	
	\bibitem{CY MRL}
	S.-Y. A.  Chang and P. C. Yang,
	On uniqueness of solutions of nth order differential equations in conformal geometry,  {\em Math. Res. Lett.} 4(1997), no.1, 91–102.
	\bibitem{CL91}
	W. Chen and C. Li,  Classification of solutions of some nonlinear elliptic equations. {\em Duke Math. J.}  63,615-622(1991).
	
		\bibitem{DM}
	Z. Djadli and A. Malchiodi,  Existence of conformal metrics with constant Q-curvature. {\em Ann. of Math.} (2) 168 (2008), no. 3, 813–858.
	\bibitem{GJMS}
	C. Graham, R.  Jenne, L.  Mason and G.  Sparling:
	Conformally invariant powers of the Laplacian. I: Existence.  {\em J. Lond. Math. Soc.} II. Ser. 46, No. 3, 557-565 (1992).
	\bibitem{GHM}
	C. Gui, F. Hang and A.  Moradifam, The sphere covering inequality and its dual. {\em  Comm. Pure Appl. Math.} 73 (2020), no. 12, 2685–2707.
	\bibitem{GL}
	C. Gui and Q. Li,  Some geometric inequalities related to Liouville equation, {\em Math. Zeit } 305(2023), no.3, Paper No. 40, 30 pp.
	\bibitem{GM}
	C. Gui and A. Moradifam,  The sphere covering inequality and its applications, {\em Invent. Math.} (2018) 214:1169–1204.
	\bibitem{Gursky}
	M. Gursky, 	The principal eigenvalue of a conformally invariant differential operator, with an application to semilinear elliptic PDE, 	{\em Comm. Math. Phys.} 207(1999), no.1, 131–143.
	\bibitem{HY}
	X. Huang and D. Ye, Conformal metrics in $\mr^{2m}$ with constant Q-curvature and arbitrary volume. {\em Calc. Var. Partial Differential Equations } 54 (2015), no. 4, 3373–3384.
	\bibitem{Hyder}
	A. Hyder,  Conformally Euclidean metrics on $\mr^n$ with arbitrary total Q-curvature. {\em Anal. PDE}  10 (2017), no. 3, 635–652.
	\bibitem{HM}
	A. Hyder and L. Martinazzi,  Normal conformal metrics on $\mathbb{R}^4$ with Q-curvature having power-like growth. {\em J. Differential Equations} 301 (2021), 37–72.
	\bibitem{Lee-Park}
J. 	Lee and T.  Parker, 	The Yamabe problem,
	{\em Bull. Amer. Math. Soc. (N.S.)} 17(1987), no.1, 37–91.
	\bibitem{Li23}
	M. Li,   A Liouville-type theorem in conformally invariant equations, {\em Math. Ann.}(2023) https://doi.org/10.1007/s00208-023-02712-9.
	\bibitem{Li One bubble for Q}
	M. Li, Asymptotic behavior of conformal metrics with null  Q-curvature, arXiv:2308.12656.
	\bibitem{Li 23 Q-curvature}
	M. Li, The total Q-curvature, volume entropy and polynomial growth polyharmonic functions, arXiv:2306.15623.
	\bibitem{LX23}
	M. Li and X. Xu,   Asymptotic behavior of conformal  metrics  on  torus, {\em Discrete Contin.  Dyn. Syst.} 44 (2024), no. 5, 1376–1394.
	\bibitem{Lin}
	C. Lin,  A classification of solutions of a conformally invariant fourth order equation in $R^n$. {\em  Comment. Math. Helv.} 73 (1998), no. 2, 206–231.
	\bibitem{Lin-Yuan}
	Y.-J. Lin and W. Yuan, Deformations of Q-curvature, II, {\em Calc. Var. Partial Differential Equations} 61:2 (2022), art. id 74, 28 pp.
	\bibitem{Mar AIH}
	L. Martinazzi,
	Conformal metrics on $\mr^{2m}$ with constant Q-curvature and large volume. {\em Ann. Inst. Henri Poincaré, Anal. Non Linéaire} 30, No. 6, 969-982 (2013).
	\bibitem{Mar MZ}
	L. Martinazzi, Classification of solutions to the higher order Liouville's equation on $\mr^{2m}$. {\em Math. Zeit} 263 (2009), no. 2, 307–329.
		\bibitem{PT}
	A. Poliakovsky and G. Tarantello,
	On a planar Liouville-type problem in the study of selfgravitating strings,	{\em J. Differ. Equations} 252, No. 5, 3668-3693 (2012).
	\bibitem{Su}
	T. Suzuki,   Global analysis for a two-dimensional elliptic eigenvalue problem with the exponential nonlinearity.  {\em Ann. Inst. H. Poincaré C Anal. Non Linéaire } 9 (1992), no. 4, 367–397.
	\bibitem{Wang-Wang}
S. 	Wang and Y. Wang, 	Integrability of scalar curvature and normal metric on conformally flat manifolds,
	{\em J. Differential Equations} 265(2018), no.4, 1353–1370.
	\bibitem{WX}
	J. Wei and X. Xu,  Classification of solutions of higher order conformally invariant equations.  {\em Math. Ann. } 313 (1999), no. 2, 207–228.
		\bibitem{WX JFA}
	J. Wei and X.  Xu, On conformal deformations of metrics on $S^n$,	{\em J. Funct. Anal.} 157, No. 1, 292-325 (1998).
	\bibitem{WY}
	J. Wei and D. Ye,   Nonradial solutions for a conformally invariant fourth order equation in $\mr^4$. {\em Calc. Var. Partial Differential Equations} 32 (2008), no. 3, 373–386.
	\bibitem{Xu05}
	X. Xu,  Uniqueness and non-existence theorems for conformally invariant equations. {\em  J. Funct. Anal.} 222 (2005), no.1, 1–28.
	\bibitem{Yuan}
	W. Yuan, 	Volume comparison with respect to scalar curvature, 	{\em Anal. PDE} 16(2023), no.1, 1–34.
	\bibitem{Zhang Y}
	Y. Zhang, 	Scalar curvature volume comparison theorems for almost rigid spheres,	{\em Proc. Amer. Math. Soc.} 151(2023), no.8, 3577–3586.
\end{thebibliography}
\end{document}